\def\C{\mathcal C}
\def\L{\mathcal L}
\newtheorem{lem}{Lemma}[section]
\newtheorem{prop}[lem]{Proposition}
\newtheorem{coro}[lem]{Corollary}
\newtheorem{thm}[lem]{Theorem}
\newtheorem{de}[lem]{Definition}
\newtheorem{pozn}[lem]{Remark}
\newtheorem{ex}[lem]{Example}
\newtheorem{obs}[lem]{Observation}
\def\pf{\begin{proof}}
\def\pfk{\end{proof}}
\begin{document}
\title{Factor frequencies in generalized Thue-Morse words}
\maketitle
\begin{center}
\author{L{\!'}. Balkov\'a\footnote{e-mail: lubomira.balkova@fjfi.cvut.cz}\\[2mm]
{\normalsize Department of Mathematics FNSPE, Czech Technical
University in Prague}\\
{\normalsize Trojanova 13, 120 00 Praha 2, Czech Republic}}
\end{center}

\begin{abstract}
We describe factor frequencies of the generalized Thue-Morse word ${\mathbf t}_{b,m}$ defined for $b \geq 2, \ m\geq 1, \ b,m \in \mathbb N$, as the
fixed point starting in $0$ of the morphism $$\varphi_{b,m}: k \ \to \ k(k+1)\dots(k+b-1),$$
where $k \in \{0,1,\dots, m-1\}$ and where the letters are expressed modulo $m$.
We use the result of Frid~\cite{Fr} and the study of generalized Thue-Morse words by Starosta~\cite{St}.
\end{abstract}
\section{Introduction}
The generalized Thue-Morse word ${\mathbf t}_{b,m}$ is defined for $b \geq 2, \ m\geq 1, \ b,m \in \mathbb N$, as the
fixed point starting in $0$ of the morphism $$\varphi_{b,m}:  k \ \to \ k(k+1)\dots(k+b-1),$$ where $k \in \{0,1,\dots, m-1\}$ and where the letters are expressed modulo $m$.
Naturally, the class of generalized Thue-Morse words contains the famous Thue-Morse word
${\mathbf t}_{2,2}$ whose factor frequencies have been determined by Dekking~\cite{De}.

Generalized Thue-Morse words belong to the class of circular fixed points of uniform marked primitive morphisms.
For such a~class, Frid~\cite{Fr} has provided an algorithm for the computation of factor frequencies.
We recall her algorithm in Section 1.
The aim of this paper is to describe the set of frequencies of factors in ${\mathbf t}_{b,m}$ of length $n$ for every $n \in \mathbb N$.
The most direct way is to apply Frid's algorithm. However, there is even an easier way thanks to the knowledge of reduced Rauzy graphs (obtained from the description of bispecial factors by Starosta~\cite{St}) and the invariance of the generalized Thue-Morse word under symmetries preserving factor frequencies. In Section 2, we define reduced Rauzy graphs and their relation to factor frequencies. In Section 3, we explain what a~symmetry is and how it preserves factor frequencies.
The main result is presented in Section 4, where we combine Frid's algorithm, reduced Rauzy graphs, and symmetries in order to get factor frequencies of generalized Thue-Morse words.
Recently, an optimal upper bound on the number of factor frequencies in infinite words whose language is invariant under more symmetries
has been derived in~\cite{Ba}. The generalized Thue-Morse word is an example of infinite words for which the upper bound is not attained, as shown in Section 5.

We ask the reader to consult Preliminaries of the paper Generalized Thue-Morse words and palindromic richness by Starosta~\cite{St}
for undefined terms.

\section{Factor frequencies of fixed points of circular marked uniform morphisms}
If $w$ is a~factor of an infinite word $\mathbf u$ and if the following limit exists
$$\lim_{|v| \to \infty, v \in \L(\mathbf u)}
\frac {\# \{ \mbox{occurrences of $w$ in $v$} \} }{|v|}\,,$$
then it is denoted by $\rho(w)$ and called the {\em frequency} of $w$.
It is known~\cite{Qu} that factor frequencies of fixed points of primitive morphisms exist.
Generalized Thue-Morse words are fixed points of primitive morphisms, therefore we limit
our considerations in the sequel to primitive morphisms.

Let us recall first a~result of Frid~\cite{Fr}, which is useful for the calculation of factor frequencies in
fixed points of primitive morphisms. In order to introduce the result, we need some
further notions. Let $\varphi$ be a~morphism on ${\mathcal A}^*=\{a_1, a_2, \dots, a_m\}^*$. We associate with $\varphi$ the {\em incidence matrix} $M_\varphi$ given by $[M_\varphi]_{ij}=|\varphi(a_j)|_{a_i}$, where $|\varphi(a_j)|_{a_i}$ denotes the number of occurrences of $a_i$ in $\varphi(a_j)$.
The morphism $\varphi$ is called {\em primitive} if there exists $k \in \mathbb N$ satisfying that the power $M_{\varphi}^k$ has all entries strictly positive. As shown in~\cite{Qu}, for fixed points of primitive morphisms,
\begin{itemize}
 \item factor frequencies exist,
 \item it follows from the Perron-Frobenius theorem that the incidence matrix has one dominant eigenvalue $\lambda$ which is larger than the modulus of any other eigenvalue,
 \item the components of the unique eigenvector $(x_1, x_2,\dots, x_m)^T$ corresponding to $\lambda$ normalized so that $\sum_{i=1}^m x_i=1$ coincide with the letter frequencies, i.e., $x_i=\rho(a_i)$ for all $i \in \{1,2,\dots,m\}$.
\end{itemize}
Let $\varphi$ be a~morphism on ${\mathcal A}^*$. We denote $\psi_{ij}:{\mathcal A}^+ \to {\mathcal A}^+$, where $i,j \in \mathbb N$, the mapping that associates with $v \in {\mathcal A}^+$ the word $\psi_{ij}(v)$ obtained from
$\varphi(v)$ by erasing $i$ letters from the left and $j$ letters from the right, where $i+j < |\varphi(v)|$.
We say that a~word $v \in {\mathcal A}^+$ admits an {\em interpretation} $s=(b_0b_1\dots b_m, i,j)$ if $v=\psi_{ij}(b_0b_1\dots b_m)$, where $b_k \in {\mathcal A}$ and $i <|\varphi(b_0)|$ and $j<|\varphi(b_m)|$. The word $a(s)=b_0b_1\dots b_m$ is an {\em ancestor} of $s$. The set of all interpretations of $v$ is denoted $I(v)$.
Now, we can recall the result of Frid for factor frequencies of fixed points of primitive morphisms.
\begin{prop}\label{prop_ancestors}
Let $\varphi$ be a~primitive morphism having a~fixed point $\mathbf u$ and let $\lambda$ be the dominant eigenvalue of the incidence matrix $M_\varphi$. Then for any factor $v \in {\mathcal L}(\mathbf u)$, it holds
\begin{equation}\label{ancestors}
\rho(v)=\frac{1}{\lambda}\sum_{s \in I(v)}\rho(a(s)).
\end{equation}
\end{prop}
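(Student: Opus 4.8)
The plan is to establish the frequency formula by counting occurrences of $v$ inside $\varphi(w)$ for a long factor $w$, and then passing to the limit. The central idea is that every occurrence of $v$ in the fixed point arises by applying $\varphi$ to some factor $w$ of $\mathbf u$ and then reading off $v$ somewhere inside $\varphi(w)$; the bookkeeping of \emph{where} inside $\varphi(w)$ the copy of $v$ sits is exactly the data recorded by an interpretation $s = (b_0 b_1 \cdots b_m, i, j)$. First I would fix a factor $v$ and, for a generic long factor $w \in \L(\mathbf u)$, count occurrences of $v$ in $\varphi(w)$. Because $\varphi$ is a morphism, $\varphi(w)$ is the concatenation of the blocks $\varphi(w_1) \varphi(w_2) \cdots \varphi(w_{|w|})$, and any occurrence of $v$ spans a consecutive run of these blocks $\varphi(b_0) \cdots \varphi(b_m)$, starting $i$ letters into the first block and ending $j$ letters before the end of the last. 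That run $a(s) = b_0 \cdots b_m$ is precisely an ancestor, and the pair $(i,j)$ completes the interpretation. Conversely, each occurrence in $\mathbf u$ of an ancestor $a(s)$ gives rise, under $\varphi$, to exactly one occurrence of $v$ at the prescribed offset. This sets up a bijection between occurrences of $v$ in $\varphi(w)$ and the disjoint union over $s \in I(v)$ of occurrences of $a(s)$ in $w$ (up to controlled boundary effects at the two ends of $w$).

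Next I would convert this occurrence count into a frequency statement. Counting occurrences of $v$ in $\varphi(w)$ and dividing by $|\varphi(w)|$, the bijection above yields
\begin{equation*}
\frac{\#\{\text{occurrences of }v\text{ in }\varphi(w)\}}{|\varphi(w)|}
= \frac{1}{|\varphi(w)|}\sum_{s \in I(v)} \#\{\text{occurrences of }a(s)\text{ in }w\} + O\!\left(\frac{1}{|\varphi(w)|}\right),
\end{equation*}
where the error term absorbs the finitely many occurrences of $v$ straddling the outer boundary of $\varphi(w)$, whose number is bounded by a constant depending only on $|v|$ and the uniform length of $\varphi$. Since $\varphi$ is uniform, $|\varphi(w)| = \lambda |w|$ with $\lambda$ the common block length, which is also the dominant eigenvalue of $M_\varphi$ in the uniform case. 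Writing each summand as $\#\{\text{occurrences of }a(s)\text{ in }w\}/|w|$ times $|w|/|\varphi(w)| = 1/\lambda$, and letting $|w| \to \infty$ along $\L(\mathbf u)$, each ratio tends to $\rho(a(s))$ by the definition of frequency, while the error term vanishes.

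The remaining point is to argue that the limit of the left-hand side is genuinely $\rho(v)$. Here I would use that $\mathbf u$ is a fixed point, so $\mathbf u = \varphi(\mathbf u) = \lim_n \varphi^n(0)$; as $w$ ranges over longer and longer factors, the words $\varphi(w)$ are cofinal in $\L(\mathbf u)$, so the frequency of $v$ computed along the subsequence $\{\varphi(w)\}$ agrees with $\rho(v)$ — this is where the existence of frequencies for primitive fixed points, quoted from \cite{Qu}, is used, since existence guarantees the full limit equals any subsequential one. Combining the three displays gives $\rho(v) = \frac{1}{\lambda} \sum_{s \in I(v)} \rho(a(s))$, as claimed.

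The step I expect to be the main obstacle is making the bijection between occurrences precise and uniform, in particular verifying that distinct interpretations produce disjoint, non-overcounted families of occurrences and that each occurrence of $v$ admits \emph{exactly one} interpretation relative to the block decomposition of $\varphi(w)$. This is where one genuinely needs $\varphi$ to be marked and circular rather than merely primitive: these hypotheses guarantee that the factorization of $\varphi(w)$ into blocks $\varphi(w_k)$ can be recovered synchronously, so that the starting block and offset $i$ of an occurrence of $v$ are uniquely determined, ruling out the ambiguity that would otherwise inflate the sum. I would isolate this recognizability property as the key lemma, after which the limit computation is routine.
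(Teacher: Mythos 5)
The paper does not actually prove this proposition --- it is quoted from Frid~\cite{Fr} --- so your attempt must be judged on its own. Your central device, the bijection between occurrences of $v$ in $\varphi(w)$ and pairs (interpretation $s\in I(v)$, occurrence of the ancestor $a(s)$ in $w$), is the right one and is essentially Frid's argument; your first two paragraphs are sound in outline. But there are two genuine defects. The smaller one: you write ``since $\varphi$ is uniform, $|\varphi(w)|=\lambda|w|$.'' Uniformity is not a hypothesis of Proposition~\ref{prop_ancestors}, which concerns arbitrary primitive morphisms, so this step only proves a special case. In general $|\varphi(w)|$ is not proportional to $|w|$; one must instead show $|\varphi(w)|/|w|\to\lambda$ as $|w|\to\infty$ along factors of $\mathbf u$. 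This follows from the Perron--Frobenius facts quoted from~\cite{Qu}: $|\varphi(w)|=\sum_{a}|w|_a\,|\varphi(a)|$, each $|w|_a/|w|\to\rho(a)$, and $\sum_a\rho(a)\,|\varphi(a)|=\lambda$ because the vector of letter frequencies is the normalized $\lambda$-eigenvector of $M_\varphi$ and $|\varphi(a)|$ is the corresponding column sum. Without this replacement your limit computation does not apply to the stated proposition.

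The larger defect is your closing paragraph, which asserts that the occurrence bijection ``genuinely needs $\varphi$ to be marked and circular'' and that you would isolate this recognizability property as the key lemma. That would sink the proof: markedness and circularity are not hypotheses here, so an argument resting on them cannot establish the proposition as stated. Moreover, no recognizability is needed at all. You are counting occurrences of $v$ inside $\varphi(w)$, where the factorization of $\varphi(w)$ into the blocks $\varphi(w_1)\varphi(w_2)\cdots\varphi(w_{|w|})$ is known \emph{by construction}, because you produced $\varphi(w)$ by applying $\varphi$ to $w$; nothing has to be recovered from the word $\varphi(w)$ alone. Relative to this given block structure, every occurrence of $v$ meets a unique minimal run of blocks, which determines the ancestor, its position in $w$, and the offsets $i,j$, and conversely every such pair yields one occurrence --- an exact bijection, valid for any morphism (in fact with no boundary error term at all if one takes $w$ to be prefixes $P_n$ of $\mathbf u$, since $\varphi(P_n)$ is again a prefix). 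Recognizability is what you would need to show that a factor has a \emph{unique} interpretation in $\mathbf u$ --- that is the content of Proposition~\ref{unique_interpretation}, which does assume uniform, marked, and circular --- but summing over all of $I(v)$ in equation~\eqref{ancestors} is precisely what makes such uniqueness unnecessary. So the ``key lemma'' you propose is both unavailable under the stated hypotheses and irrelevant to the statement.
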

For circular fixed points of uniform marked primitive morphisms, the algorithm of Frid~\cite{Fr} provides the possible frequencies of factors of a~given length and for every frequency, the number of factors having that frequency.
In order to describe her algorithm, we have to recall several notions.
A~morphism $\varphi$ defined on the alphabet ${\mathcal A}$ is called {\em uniform} if all images of letters are of the same length $b$, i.e., $|\varphi(a)|=b$ for all $a \in {\mathcal A}$. In the case of a~uniform primitive morphism $\varphi$, the dominant eigenvalue of the incidence matrix $M_\varphi$ is $\lambda=b$. A~morphism is called {\em marked} if every pair of distinct letter images differs both in the first letter and in the last letter.
Let $\mathbf u$ be a~fixed point of a~morphism $\varphi$ defined on $\mathcal A$, then its factor $w$ contains a~{\em synchronization point} $(w_1, w_2)$ if $w=w_1w_2$ and for every $v_1, v_2 \in {\mathcal A}^*$ and for every factor $s$ of $\mathbf u$, there exists factors $s_1,s_2$ of $\mathbf u$ such that
the following implication holds
$$v_1wv_2=\varphi(s) \Rightarrow s=s_1s_2, \ v_1w_1=\varphi(s_1), \ w_2v_2=\varphi(s_2).$$
In other words, a~synchronization point marks a~boundary between letter images in every occurrence of $w$ in $\mathbf u$.
Any factor $w$ of $\mathbf u$ that contains a~synchronization point is called {\em circular}.
We call a~fixed point $\mathbf u$ of a~morphism $\varphi$ {\em circular (with synchronization delay $L$)} if every factor $w$ of length greater than or equal to $L$ is circular. For uniform marked primitive morphisms, Proposition~\ref{prop_ancestors} takes the following form.
\begin{prop}\label{unique_interpretation}
Let $v$ be a~circular word of a~uniform marked primitive morphism $\varphi$ with the letter image length $b$, then there exists a~unique interpretation of $v$. Moreover, if we denote the unique ancestor of $v$ by $w$, then $\rho(v)=\frac{\rho(w)}{b}$.
\end{prop}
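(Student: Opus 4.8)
The frequency formula will follow immediately once uniqueness is in hand, so the plan is to concentrate on showing that $I(v)$ is a singleton and then read $\rho(v)$ off Proposition~\ref{prop_ancestors}. Indeed, uniformity of $\varphi$ gives $\lambda = b$, so \eqref{ancestors} reads $\rho(v) = \frac1b \sum_{s \in I(v)} \rho(a(s))$; if there is a single interpretation with ancestor $w$, this collapses to $\rho(v) = \rho(w)/b$. Existence of at least one interpretation is automatic: $v$ is a factor of the fixed point $\mathbf u = \varphi(\mathbf u)$, and decomposing $\mathbf u$ into the images of its letters, any occurrence of $v$ sits across a run of consecutive images $\varphi(c_p)\cdots\varphi(c_q)$ and thus determines an interpretation with ancestor $c_p \cdots c_q$. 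So the whole content is uniqueness.

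I would establish uniqueness in two stages: first pin down the two erasing lengths using circularity, then reconstruct the ancestor using markedness. Let $(\alpha,\beta)$ be a synchronization point, so $v = \alpha\beta$. Take any interpretation $(b_0 \cdots b_n, i, j)$ and write $\varphi(b_0\cdots b_n) = p\,v\,q$ with $|p| = i < b$ and $|q| = j < b$. Feeding the contexts $p$ and $q$ into the defining implication of the synchronization point yields a factorization $b_0 \cdots b_n = s_1 s_2$ with $p\alpha = \varphi(s_1)$. Since $\varphi$ is uniform, $|\varphi(s_1)| = b|s_1|$, so $i + |\alpha| \equiv 0 \pmod b$; as $0 \le i < b$ this forces a single value of $i$, the same for every interpretation. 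The total length $i + |v| + j$ is a multiple of $b$, so $j$ is likewise pinned down as the unique residue in $\{0, \dots, b-1\}$ congruent to $-(i+|v|)$ modulo $b$.

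Once $i$ and $j$ are fixed, so are the number of ancestor letters $n+1 = (i + |v| + j)/b$ and the positions $kb$ of all block boundaries inside $p\,v\,q$. It then remains to see that each letter $b_k$ is forced. For $1 \le k \le n-1$ the block $\varphi(b_k)$, occupying $[kb,(k+1)b)$, lies entirely within $v$ (here $i, j < b$ is used), so it is read directly from $v$ and recovers $b_k$ because a marked morphism is injective on letters. The first block overlaps $v$ in its last $b - i \ge 1$ letters, so its last letter is visible inside $v$; by markedness (distinct last letters) this determines $b_0$. Symmetrically, the visible first $b - j \ge 1$ letters of the last block determine $b_n$ through distinctness of first letters. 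Hence the ancestor, and with it the interpretation, is unique.

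The step I expect to cause the most trouble is this last reconstruction in the degenerate regime, where $v$ is so short that the first and last blocks coincide ($n = 0$) and neither extreme letter of $\varphi(b_0)$ need lie in $v$. Here circularity rescues the argument: the factorization $b_0 = s_1 s_2$ forces $s_1$ or $s_2$ to be empty, hence $i = 0$ or $j = 0$, so the required first or last letter of $\varphi(b_0)$ becomes visible and markedness again applies. By contrast, the modular bookkeeping that fixes $i$ and $j$, and the final specialization of Proposition~\ref{prop_ancestors}, are routine.
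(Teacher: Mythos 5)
The paper itself contains no proof of this proposition --- it is quoted from Frid's work \cite{Fr} --- so your argument can only be measured against the standard one, which it follows in outline: existence of an interpretation from the fixed-point decomposition, circularity pinning down the cutting positions modulo $b$, markedness recovering the ancestor letters, the one-block case treated separately, and the frequency claim read off from Proposition~\ref{prop_ancestors} with $\lambda=b$. All of that structure is right, and your handling of the degenerate case $n=0$ is a correct use of the synchronization property.

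There is, however, one genuine gap, located exactly at the step you treat as automatic: ``feeding the contexts $p$ and $q$ into the defining implication of the synchronization point.'' That implication, as defined in the paper, is only available when $v_1wv_2=\varphi(s)$ for $s$ a \emph{factor of} $\mathbf u$, whereas the ancestor $b_0\cdots b_n$ of your interpretation is, under the paper's literal definition of interpretation, an arbitrary word over $\mathcal A$; you never establish that it lies in $\mathcal{L}(\mathbf u)$. This is not a pedantic point, because with the literal definition the uniqueness statement is actually false. Take $\varphi_{2,3}\colon 0\to 01,\ 1\to 12,\ 2\to 20$. The word $v=0120$ is a factor of $\mathbf t_{2,3}$ of length $2b=4$, hence circular, and it admits two interpretations: $(02,0,0)$, since $\varphi(02)=0120$, and $(210,1,1)$, since $\varphi(210)=201201$. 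The second ancestor $210$ is not a factor of $\mathbf t_{2,3}$ (every occurrence of $21$ there is followed by $2$), so it can never be reached by your modular argument. The proposition must therefore be read, as in Frid's paper, with interpretations required to have ancestors in $\mathcal{L}(\mathbf u)$ --- equivalently, $I(v)$ is the set of interpretations induced by actual occurrences of $v$ in $\mathbf u$ --- and your proof needs to state and use this restriction. Once it is in place, the application of the synchronization point is licit, and the rest of your argument (the congruences fixing $i$ and $j$, recovery of the interior letters by injectivity on letters, of $b_0$ and $b_n$ by distinctness of last, respectively first, letters, and the $n=0$ case) goes through and proves the proposition.
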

We define the {\em structure ordering number} $K$ for fixed points of circular uniform morphisms as the least integer satisfying $b(K-1)+1 \geq L$, where $b$ is the length of letter images and $L$ is the synchronization delay.
The following statements are to be found in~\cite{Fr} as Proposition~4 and Theorem~5.
\begin{prop}
Let $n \geq K$, there exists a~unique triplet of decomposition parameters $(p(n), k(n), \Delta(n))$, where $p(n) \in \mathbb N$,
$k(n) \in \{K, \dots, b(K-1)\}$, and $\Delta(n) \in \{1,\dots, b^{p(n)} \}$, such that
$$n=b^{p(n)}(k(n)-1)+\Delta(n).$$
\end{prop}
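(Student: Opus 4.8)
The plan is to recognize the claimed identity $n = b^{p(n)}(k(n)-1) + \Delta(n)$ as a variant of the greedy base-$b$ expansion and to prove existence and uniqueness simultaneously by exhibiting a bijection between admissible triplets and the integers $n \geq K$. First I would analyze, for each fixed exponent $p \geq 0$, the set of integers representable with that exponent, namely $\{\, b^p(k-1) + \Delta : k \in \{K,\dots,b(K-1)\},\ \Delta \in \{1,\dots,b^p\}\,\}$. Since $k-1$ ranges over $\{K-1,\dots,b(K-1)-1\}$ and $\Delta$ over $\{1,\dots,b^p\}$, the smallest such value is $b^p(K-1)+1$ and the largest is $b^p(b(K-1)-1)+b^p = b^{p+1}(K-1)$. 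I would then show that this set is exactly the integer interval $I_p := \{\, b^p(K-1)+1,\ \dots,\ b^{p+1}(K-1)\,\}$.

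The heart of the existence argument is the observation that the intervals $I_p$ tile the integers $\geq K$: the interval $I_p$ ends at $b^{p+1}(K-1)$ and $I_{p+1}$ begins at $b^{p+1}(K-1)+1$, while $I_0$ begins at $K$ (using $K \geq 2$, so that $K-1 \geq 1$ and the intervals are nonempty). Hence for any $n \geq K$ there is a unique $p = p(n)$ with $b^p(K-1) < n \leq b^{p+1}(K-1)$, which fixes the exponent. With $p$ determined I would set $M := n - b^p(K-1)$, so $M \in \{1,\dots,(b-1)(K-1)b^p\}$, and recover $k$ and $\Delta$ by Euclidean division of $M-1$ by $b^p$: writing $M-1 = b^p q + r$ with $0 \leq r < b^p$, I put $\Delta := r+1 \in \{1,\dots,b^p\}$ and $k := q + K$. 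The bound $M \leq (b-1)(K-1)b^p$ forces $q \leq (b-1)(K-1)-1$, so that $k$ lands in the required range $\{K,\dots,b(K-1)\}$, and a short check gives $b^p(k-1)+\Delta = b^p(K-1) + b^p q + (r+1) = b^p(K-1) + M = n$.

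Uniqueness splits along the same two layers. The exponent $p(n)$ is forced because the intervals $I_p$ are pairwise disjoint, so no integer admits two admissible values of $p$; and for fixed $p$ the map $(k,\Delta) \mapsto b^p(k-1) + \Delta$ is injective on the admissible range, again by uniqueness of quotient and remainder in the division of $M-1$ by $b^p$. This pins down $(k(n),\Delta(n))$. I expect the only delicate point to be the endpoint bookkeeping: verifying that the maximal value for exponent $p$ is exactly $b^{p+1}(K-1)$, matching the minimal value $b^{p+1}(K-1)+1$ for exponent $p+1$ with neither gap nor overlap, and that the extreme index $k = b(K-1)$ is attained precisely at the top of each interval. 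Once the ranges are matched exactly, both existence and uniqueness of the decomposition parameters follow at once.
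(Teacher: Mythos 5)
Your proof is correct and complete. Note, however, that the paper itself offers no proof of this proposition: it is recalled verbatim from Frid's paper (cited as Proposition~4 of~\cite{Fr}), and the paper merely appends the closed-form expressions $p(n)=\left\lceil \log_b \frac{n}{K-1}\right\rceil-1$, $k(n)=\left\lceil \frac{n}{b^{p(n)}}\right\rceil$, $\Delta(n)=n-b^{p(n)}(k(n)-1)$ without derivation. Your interval-tiling argument is a clean, self-contained substitute, and it is in fact the natural justification of exactly those formulae: your defining condition $b^p(K-1)<n\leq b^{p+1}(K-1)$ for the exponent is equivalent to $p=\left\lceil \log_b \frac{n}{K-1}\right\rceil-1$, and your Euclidean division of $M-1=n-b^p(K-1)-1$ by $b^p$ recovers $k=\left\lceil \frac{n}{b^{p}}\right\rceil$, since $n=b^p(k-1)+\Delta$ with $1\leq\Delta\leq b^p$ gives $\lceil n/b^p\rceil=k$. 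So your route buys a proof where the paper only has a citation, at the cost of some endpoint bookkeeping that the explicit formulae hide. One small point you handled well but should keep explicit: the hypothesis $K\geq 2$ is not stated in the proposition, yet it is forced by the requirement that the index set $\{K,\dots,b(K-1)\}$ be nonempty (for $b\geq 2$, $b(K-1)\geq K$ holds exactly when $K\geq 2$), so invoking it for the nonemptiness of your intervals $I_p$ is legitimate and worth a sentence rather than a parenthesis.
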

The explicit formulae read $p(n)=\left\lceil \log_b \frac{n}{K-1}\right\rceil-1, \ k(n)=\left\lceil \frac{n}{b^{p(n)}}\right\rceil, \ \Delta(n)=n-b^{p(n)}(k(n)-1).$
\begin{thm}\label{explicit}
Let $\mathbf u$ be a~circular fixed point of a~uniform marked primitive morphism $\varphi$.
Denote ${\mathcal L}_n(\mathbf u)=\{v_1^{(n)}, v_2^{(n)}, \dots, v_{{\mathcal C}(n)}^{(n)}\}$.
For all $n\geq K$, the set ${\mathcal L}_{n+1}(\mathbf u)$ can be partitioned into
\begin{enumerate}
\item ${\mathcal C}(k(n)+1)$ groups of $\Delta(n)$ words each, every word in the $j$th group having the frequency $\frac{1}{b^{p(n)}}\rho\bigl(v_{j}^{k(n)+1}\bigr)$, $j \in \{1,\dots, {\mathcal C}(k(n)+1)\}$,
\item ${\mathcal C}(k(n))$ groups of $b^{p(n)}-\Delta(n)$ words each, every word in the $j$th group having the frequency $\frac{1}{b^{p(n)}}\rho\bigl(v_{j}^{k(n)}\bigr)$, $j \in \{1,\dots, {\mathcal C}(k(n))\}$.
\end{enumerate}
\end{thm}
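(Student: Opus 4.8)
The plan is to realise each factor of length $n+1$ as a uniquely located window inside $\varphi^{p(n)}(u)$ for a short factor $u$, and to read off both its frequency and its multiplicity from that location. Throughout I write $p=p(n)$, $k=k(n)$, $\Delta=\Delta(n)$, so that $n=b^{p}(k-1)+\Delta$ with $\Delta\in\{1,\dots,b^{p}\}$ and $k\in\{K,\dots,b(K-1)\}$; I treat $p\ge 1$, the case $p=0$ being trivial since then $k=n$, $\Delta=1$, and the claimed partition reduces to the tautology that each factor forms its own singleton group.

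First I would establish the frequency identity. Let $v\in\L_{n+1}(\mathbf u)$. Since $p\ge1$ gives $n+1>b^{p}(k-1)\ge b(K-1)$, we have $|v|\ge b(K-1)+1\ge L$, so $v$ is circular and Proposition~\ref{unique_interpretation} supplies a unique ancestor $w_{1}$ with $\rho(v)=\rho(w_{1})/b$. Writing $w_{i}$ for the $i$-fold ancestor, the relation $b|w_{i}|=|w_{i-1}|+(\text{number of erased letters})$ gives $|w_{i}|\ge (n+1)/b^{i}$, whence for every $i\le p-1$ one gets $|w_{i}|\ge (n+1)/b^{p-1}>b(k-1)\ge b(K-1)$, i.e.\ $|w_{i}|\ge b(K-1)+1\ge L$; thus each of $w_{0}=v,w_{1},\dots,w_{p-1}$ is circular. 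Applying Proposition~\ref{unique_interpretation} at the steps $0,1,\dots,p-1$ and multiplying yields $\rho(v)=\rho(u)/b^{p}$, where $u:=w_{p}$ is the $p$-fold ancestor, itself a factor of $\mathbf u$. Unwinding these interpretations also exhibits $v$ as a length-$(n+1)$ window of $\varphi^{p}(u)$ meeting both $\varphi^{p}$ of the first and $\varphi^{p}$ of the last letter of $u$.

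Next I would determine $|u|$ and count the windows. A length-$(n+1)$ window can sit inside $\varphi^{p}(u)$ and meet both its first and its last block only for $|u|\in\{k,k+1\}$: for $|u|\le k-1$ the word $\varphi^{p}(u)$ has length $b^{p}|u|\le n-\Delta<n+1$, while for $|u|\ge k+2$ one checks $b^{p}(|u|-1)-n>b^{p}-1$, so a window meeting the last block cannot also meet the first. For a fixed such $u$ a window occupies positions $x,\dots,x+n$ with $0\le x\le b^{p}-1$ (it meets the first block) and $b^{p}(|u|-1)\le x+n\le b^{p}|u|-1$ (it lies in $\varphi^{p}(u)$ and meets the last block). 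Intersecting these ranges and inserting $n=b^{p}(k-1)+\Delta$, a short computation gives exactly $b^{p}-\Delta$ admissible positions $x$ when $|u|=k$ and exactly $\Delta$ admissible positions when $|u|=k+1$; crucially this count depends only on $|u|$.

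Finally I would assemble the bijection. The assignment $(u,x)\mapsto(\text{window of }\varphi^{p}(u)\text{ at }x)$ and the assignment $v\mapsto(u,x)$ given by the iterated interpretation above are mutually inverse: if two pairs yielded the same factor $v$, then applying the uniqueness of the interpretation from Proposition~\ref{unique_interpretation} level by level would force the two embeddings, hence the two pairs, to coincide. Therefore $\L_{n+1}(\mathbf u)$ is partitioned according to the $p$-fold ancestor $u$: the $\C(k+1)$ factors $v_{j}^{(k+1)}$ of length $k+1$ each yield $\Delta$ factors of frequency $\frac{1}{b^{p}}\rho(v_{j}^{(k+1)})$, and the $\C(k)$ factors $v_{j}^{(k)}$ of length $k$ each yield $b^{p}-\Delta$ factors of frequency $\frac{1}{b^{p}}\rho(v_{j}^{(k)})$ — exactly the two asserted families, the second being empty precisely when $\Delta=b^{p}$. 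The main obstacle is this last step: one must verify that the embedding position is genuinely recovered from $v$, so that windows in $\varphi^{p}(u)$ for different $u$, or distinct windows for the same $u$, never coincide as factors, and that the intermediate ancestors never fall below the synchronization delay $L$ so that the frequency iteration is legitimate; both rest on the length estimate $|w_{i}|\ge b(K-1)+1$ obtained above.
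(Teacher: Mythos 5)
Your proof is correct. Note that the paper itself gives no argument for this statement: it is imported verbatim from \cite{Fr} (Theorem~5 there), so there is no internal proof to compare against; your reconstruction is essentially the standard descendant-counting argument behind Frid's result. Concretely, you iterate Proposition~\ref{unique_interpretation} (justified by the length estimate $|w_i|\ge b(K-1)+1\ge L$ at every intermediate level, which keeps each ancestor circular) to realize each $v\in\mathcal{L}_{n+1}(\mathbf u)$ as a uniquely positioned window in $\varphi^{p(n)}(u)$ for a unique $p$-fold ancestor $u$ of length $k(n)$ or $k(n)+1$, and the window count ($b^{p(n)}-\Delta(n)$ versus $\Delta(n)$ admissible positions) together with $\rho(v)=\rho(u)/b^{p(n)}$ yields exactly the claimed partition; the arithmetic in both the position count and the exclusion of lengths $|u|\le k(n)-1$ and $|u|\ge k(n)+2$ checks out.
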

The frequencies $\rho\bigl(v_{j}^{(k)}\bigr), \ k\in \{K, \dots, b(K-1)+1\}$, can be found directly using~\eqref{ancestors}.
Theorem~\ref{explicit} provides then explicit formulae for factor frequencies of circular fixed points of uniform marked primitive morphisms.
\subsection{Reduced Rauzy graphs}
Assume throughout this section that factor frequencies of infinite words in question exist.
The {\em Rauzy graph} of order $n$ of an infinite word $\mathbf u$ is
a~directed graph  $\Gamma_n$ whose set of vertices is $\L_n(\mathbf u)$ and  set of edges is $\L_{n+1}(\mathbf u)$. An edge $e = w_0
w_1 \dots w_n$ starts in the vertex $w=w_0w_1\dots w_{n-1}$, ends in
the vertex  $v=w_1\dots w_{n-1}w_n$, and is labeled by its factor
frequency $\rho(e)$.

It is easy to see that edge frequencies in a~Rauzy graph $\Gamma_n$ behave similarly as
the current in a~circuit. We may formulate an analogy of the Kirchhoff's current law:
the sum of frequencies of edges ending in a~vertex equals the sum of
frequencies of edges starting in this vertex.
\begin{obs}[Kirchhoff's law for frequencies]\label{KLaw}
Let $w$ be a~factor of an infinite word $\mathbf u$ whose factor frequencies exist. Then
$$\rho(w)=\sum_{a \in {\rm Lext}(w)}\rho(aw)=\sum_{a \in {\rm Rext}(w)}\rho(wa).$$
\end{obs}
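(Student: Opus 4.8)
The plan is to argue directly from the definition of frequency as a limit of normalized occurrence counts, controlling the boundary terms that appear when an occurrence of $w$ sits at the very edge of a long window $v$. First I would fix a factor $v \in \L(\mathbf u)$ and, for any factor $y$, write $\#_y(v)$ for the number of occurrences of $y$ in $v$. The key observation is that every occurrence of $w$ inside $v$ whose last letter is not the last letter of $v$ is immediately followed by exactly one letter $a$, and this produces exactly one occurrence of $wa$ (necessarily with $wa \in \L(\mathbf u)$, so that $a \in {\rm Rext}(w)$); conversely, every occurrence of some $wa$ in $v$ restricts to an occurrence of $w$ that does not end at the last position. These non-boundary occurrences of $w$ are thus in bijection with the occurrences of the words $wa$, so that
$$\#_w(v) = \sum_{a \in {\rm Rext}(w)} \#_{wa}(v) + \varepsilon(v),$$
where $\varepsilon(v)$ counts the occurrences of $w$ ending at the very last position of $v$. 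Since at most one occurrence of $w$ can end at a prescribed position, $0 \le \varepsilon(v) \le 1$.

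Next I would divide this identity by $|v|$ and let $|v| \to \infty$ along factors of $\mathbf u$. Because ${\rm Rext}(w)$ is finite (its cardinality is bounded by the size of the alphabet) and each frequency $\rho(wa)$ is assumed to exist, the limit of the finite sum equals the sum of the limits, producing $\sum_{a \in {\rm Rext}(w)} \rho(wa)$. The error contributes $\varepsilon(v)/|v|$, which tends to $0$ since $\varepsilon(v) \le 1$ while $|v| \to \infty$. Combining these gives $\rho(w) = \sum_{a \in {\rm Rext}(w)} \rho(wa)$.

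Finally the left-extension identity follows by the mirror-image argument: every occurrence of $w$ in $v$ whose first letter is not the first letter of $v$ is preceded by a unique letter $a$ and yields an occurrence of $aw$, and the only occurrence that can fail to extend to the left is one starting at the first position of $v$, a correction again bounded by $1$. Dividing by $|v|$ and passing to the limit yields $\rho(w) = \sum_{a \in {\rm Lext}(w)} \rho(aw)$. I expect the only point requiring genuine care to be the boundary bookkeeping—matching occurrences of $w$ with occurrences of $wa$ (resp.\ $aw$) and verifying that the unmatched occurrences number at most one—since once the identity with a bounded additive error is in hand, the passage to the limit is immediate and presents no further obstacle.
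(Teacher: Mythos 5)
Your proof is correct, and it fills in precisely the routine counting argument that the paper leaves implicit: the paper states this as an unproved Observation (prefaced only by the remark that it "is easy to see"). Your occurrence-matching with a boundary error of at most one, followed by division by $|v|$ and passage to the limit, is the natural argument the author evidently had in mind, and it uses the hypothesis that all factor frequencies exist exactly where needed (to split the limit over the finitely many extensions).
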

The Kirchhoff's law for frequencies has some useful consequences.
\begin{coro}\label{KLaw1}
Let $w$ be a~factor of an infinite word $\mathbf u$ whose frequency exists.
\begin{itemize}
\item
If $w$ has a~unique right extension $a$, then $\rho(w)=\rho(wa)$.
\item
If $w$ has a~unique left extension $a$, then $\rho(w)=\rho(aw)$.
\end{itemize}
\end{coro}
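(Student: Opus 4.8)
The plan is to deduce both items directly from the Kirchhoff's law for frequencies established in Observation~\ref{KLaw}. The hypothesis that $\rho(w)$ exists is exactly what licenses the application of that observation to the factor $w$, so no separate existence argument is needed; everything reduces to specializing a sum over an extension set to the case in which that set is a singleton.

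For the first item, I would argue as follows. Suppose $w$ has a unique right extension, which I denote $a$; then the set $\mathrm{Rext}(w)$ of letters $c$ with $wc \in \L(\mathbf u)$ is the singleton $\{a\}$. Substituting $\mathrm{Rext}(w) = \{a\}$ into the right-hand equality of Observation~\ref{KLaw}, the sum $\sum_{c \in \mathrm{Rext}(w)} \rho(wc)$ collapses to its single term $\rho(wa)$, which yields $\rho(w) = \rho(wa)$. The second item follows by the symmetric argument on the left: if $w$ has a unique left extension $a$, then $\mathrm{Lext}(w) = \{a\}$, and the left-hand equality of Observation~\ref{KLaw} reduces to the one-term sum, giving $\rho(w) = \rho(aw)$.

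Since the whole derivation is just this collapse of a singleton sum, there is no genuine obstacle. The only point I would keep straight is notational: the statement of Kirchhoff's law uses $a$ as the bound summation variable, while the corollary uses $a$ for the name of the unique extension, so I rename the summation index (here to $c$) to avoid conflating the two roles.
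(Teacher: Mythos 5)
Your proof is correct and is exactly the argument the paper intends: Corollary~\ref{KLaw1} is stated as an immediate consequence of Observation~\ref{KLaw}, obtained by specializing the sum over ${\rm Rext}(w)$ (resp.\ ${\rm Lext}(w)$) to a singleton. The remark about renaming the bound summation variable is a sensible notational hygiene point but changes nothing of substance.
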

\begin{coro}\label{KLaw2}
Let $w$ be a~factor of an aperiodic recurrent infinite word $\mathbf u$ whose frequency exists.
Let $v$ be the shortest BS factor containing $w$, then $\rho(w)=\rho(v)$.
\end{coro}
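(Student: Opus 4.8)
The plan is to reach $v$ from $w$ by a sequence of single-letter extensions, each of which is \emph{forced} (the current factor having a unique one-sided extension), so that Corollary~\ref{KLaw1} applies and the frequency is preserved at every step. I would first fix one occurrence of $w$ inside $v$ and write $v=pwq$ accordingly, where $p$ and $q$ are (possibly empty) words. For $0\le i\le |p|$ and $0\le j\le |q|$ let $u_{i,j}$ denote the factor consisting of the last $i$ letters of $p$, followed by $w$, followed by the first $j$ letters of $q$. Then $u_{0,0}=w$, $u_{|p|,|q|}=v$, and every $u_{i,j}$ is a factor of $v$, hence of $\mathbf u$, and contains $w$.

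Next I would show that from any $u_{i,j}\neq v$ one can always perform a frequency-preserving move to $u_{i,j+1}$ or to $u_{i+1,j}$. A rightward move $u_{i,j}\to u_{i,j+1}$ is legitimate when $j<|q|$ and $u_{i,j}$ is not right special: then it has a unique right extension, which must be the letter $q_{j+1}$ since $u_{i,j}q_{j+1}=u_{i,j+1}$ is a factor, and the first item of Corollary~\ref{KLaw1} gives $\rho(u_{i,j})=\rho(u_{i,j+1})$. A leftward move is treated symmetrically, recurrence guaranteeing that a non-left-special factor has a unique left extension and the second item of Corollary~\ref{KLaw1} preserving the frequency.

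The heart of the argument is to rule out getting stuck before reaching $v$. Suppose no move is available at some $u:=u_{i,j}\neq v$. The crucial observation is that $u$ inherits specialness from $v$: if $u$ carries the full suffix $q$ (that is, $j=|q|$), then for every right extension $d$ of $v=pwq$ the word $ud$ is a suffix of $vd$, hence a factor, so $u$ is right special whenever $v$ is; symmetrically, if $u$ carries the full prefix $p$ (that is, $i=|p|$), then $u$ is left special. Now the stuck cases are handled as follows. If $u$ is both left and right special it is bispecial, a proper factor of $v$ containing $w$, contradicting the minimality of $v$. If the rightward move is blocked by $j=|q|$, inheritance makes $u$ right special, while being stuck on the left forces $u$ left special as well (directly, or via $i=|p|$ and the symmetric inheritance), again producing a forbidden bispecial factor; the case of a blockage caused by $i=|p|$ is symmetric. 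The only remaining possibility is $i=|p|$ and $j=|q|$, i.e. $u=v$.

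Consequently a forced path from $w$ to $v$ exists inside the finite grid of the $u_{i,j}$, and since every step preserves the frequency we conclude $\rho(w)=\rho(v)$. I expect the delicate point to be precisely this stuck-case analysis: one must combine the minimality of $v$ with the observation that an aligned factor of $v$ sharing its entire left (respectively right) border is automatically left (respectively right) special, which is exactly what prevents the extension process from halting at a non-bispecial intermediate factor.
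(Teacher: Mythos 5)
Your proof is correct and follows the same route the paper intends: the corollary is presented there (without an explicit proof) as a consequence of Corollary~\ref{KLaw1}, i.e.\ one passes from $w$ to $v$ by single-letter extensions that are forced and hence frequency-preserving, with recurrence supplying left extensions and the minimality of $v$ guaranteeing no intermediate factor is bispecial. Your grid formalization and the stuck-case analysis (in particular, the observation that a factor aligned with the full prefix or suffix of $v$ inherits the corresponding specialness from $v$) correctly fill in the details the paper leaves implicit.
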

The assumption of recurrence and aperiodicity in Corollary~\ref{KLaw2} is needed in order
to ensure that every factor can be extended to a~BS factor.

Corollary~\ref{KLaw1} implies that if a~Rauzy graph contains a~vertex $w$ with only one
incoming edge $aw$ and one outgoing edge $wb$, then $\rho:=\rho(aw)=\rho(w)=\rho(wb)=\rho(awb)$.
Therefore, we can replace this triplet (edge-vertex-edge) with only one edge $awb$ keeping the frequency $\rho$.
If we reduce the Rauzy graph step by step applying the above
described procedure, we obtain the so-called {\em reduced Rauzy
graph} $\tilde{\Gamma}_n$, which simplifies the investigation of
edge frequencies. In order to precise this construction, we
introduce the notion of a~simple path.
\begin{de}\label{simple_path}
Let $\Gamma_n$ be the Rauzy graph of order $n$
of an infinite word $\mathbf u$ whose factor frequencies exist. A~factor $e$ of length larger than $n$ such that its prefix and its suffix of length $n$ are special factors and $e$ does not contain any other special factors is called a~simple path. We define the label of a~simple path $e$ as $\rho(e)$.
\end{de}
\begin{de}\label{reduced_Rauzy_graph}
The reduced Rauzy graph
$\tilde{\Gamma}_n$ of $\mathbf u$ of order $n$ is a~directed graph whose
set of vertices is formed by LS and RS factors of $\L_n(\mathbf u)$
and whose set of edges is given in the following way. Vertices $w$
and $v$ are connected with an edge $e$ if there exists in $\Gamma_n$
a~simple path starting in $w$ and ending in $v$. We assign to such
an edge $e$ the label of the corresponding simple path.
  \end{de}
\begin{pozn}
According to Corollary~\ref{KLaw1} and Definition~\ref{reduced_Rauzy_graph}, if $\mathbf u$ is an aperiodic recurrent infinite word 
whose factor frequencies exist, it holds for every $n \in \mathbb N$, 
$$\{\rho(e)\bigm | e \in {\mathcal L}_{n+1}({\mathbf u})\}=\{\rho(e) \bigm | e \ \text{edge in $\tilde{\Gamma}_n$}\}.$$ 
\end{pozn}
Considering Corollary~\ref{KLaw2} and Definition~\ref{reduced_Rauzy_graph}, one may observe the following.
\begin{obs}\label{StaciGrafyBS}
Let $\mathbf u$ be an aperiodic recurrent infinite word whose factor frequencies exist.
If we find to a~reduced Rauzy graph $\tilde{\Gamma}_n$ none of whose vertices is a~BS factor the reduced Rauzy graph of minimal larger order, say $\tilde{\Gamma}_m$, containing a~vertex being a~BS factor, then
    $$\{\rho(e) \mid e \ \text{edge in}\ \tilde{\Gamma}_n\}=\{\rho(e) \mid e \ \text{edge in}\ \tilde{\Gamma}_m\} \cup \{\rho(v) \mid v \ \text{vertex in} \  \tilde{\Gamma}_m\}.$$
\end{obs}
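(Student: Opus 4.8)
The plan is to translate the statement from graphs into pure frequency sets and then prove a stability result. Writing $G_\ell:=\{\rho(w)\mid w\in\L_\ell(\mathbf u)\}$, the Remark preceding the statement identifies the edge labels of $\tilde\Gamma_k$ with $G_{k+1}$, while the vertex labels of $\tilde\Gamma_m$ form $\{\rho(v)\mid v\in\L_m(\mathbf u),\ v\ \text{special}\}$. Since a BS factor is in particular special, a BS factor of length $\ell$ is a vertex of $\tilde\Gamma_\ell$ by Definition~\ref{reduced_Rauzy_graph}; hence the hypothesis that $\tilde\Gamma_n$ has no BS vertex, together with the minimality of $m$, means there is no BS factor of length $\ell$ for any $\ell\in\{n,n+1,\dots,m-1\}$. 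The claim thus reduces to proving $G_{n+1}=G_{m+1}\cup\{\rho(v)\mid v\in\L_m(\mathbf u),\ v\ \text{special}\}$, which I would split into a stability step $G_{n+1}=G_m$ and a one-step decomposition $G_m=G_{m+1}\cup\{\rho(v)\mid v\ \text{special}\}$.

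For stability I would show $G_\ell=G_{\ell+1}$ for each $\ell\in\{n+1,\dots,m-1\}$. The inclusion $G_\ell\subseteq G_{\ell+1}$ is easy: a factor $w\in\L_\ell(\mathbf u)$ is not BS, so it has a unique extension on at least one side, and Corollary~\ref{KLaw1} gives $\rho(w)=\rho(wa)$ or $\rho(w)=\rho(bw)$, placing $\rho(w)$ in $G_{\ell+1}$. For the reverse inclusion I take $w'\in\L_{\ell+1}(\mathbf u)$ with length-$\ell$ prefix $p$ and length-$\ell$ suffix $s$; if $s$ is not left special then $\rho(w')=\rho(s)$ by Corollary~\ref{KLaw1}, and if $p$ is not right special then $\rho(w')=\rho(p)$, so in either case $\rho(w')\in G_\ell$.

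The main obstacle is the remaining case, in which $p$ is right special and $s$ is left special simultaneously. The key point will be that the factor $M$ of length $\ell-1$ obtained from $w'$ by deleting its first and last letter is both the length-$(\ell-1)$ suffix of the right special factor $p$ and the length-$(\ell-1)$ prefix of the left special factor $s$; since a suffix of a right special factor is right special and a prefix of a left special factor is left special, $M$ is a BS factor of length $\ell-1$. For $\ell-1\in\{n,\dots,m-2\}$ this contradicts the absence of BS factors in that range, so the case cannot occur and the reverse inclusion holds. Chaining these equalities yields $G_{n+1}=G_m$.

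It then remains to establish $G_m=G_{m+1}\cup\{\rho(v)\mid v\ \text{special}\}$. The inclusion $\supseteq$ is immediate, since special factors of length $m$ lie in $\L_m(\mathbf u)$ and the inclusion $G_{m+1}\subseteq G_m$ follows from the same prefix/suffix dichotomy, whose only obstruction would again force a BS factor of length $m-1$, which does not exist. For $\subseteq$, a factor $w\in\L_m(\mathbf u)$ that is special contributes $\rho(w)$ to the vertex-label set, while a non-special $w$ has a unique right extension and hence $\rho(w)=\rho(wa)\in G_{m+1}$ by Corollary~\ref{KLaw1}. Combining the two steps gives $G_{n+1}=G_m=G_{m+1}\cup\{\rho(v)\mid v\ \text{special}\}$, which is exactly the asserted equality once the frequency sets are translated back into edge and vertex labels via the Remark. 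Note that the whole argument rests on Corollary~\ref{KLaw1} and on the elementary hereditary properties of special factors, with the single genuine difficulty being the coincidence of a right special prefix and a left special suffix, ruled out precisely by the BS-free hypothesis.
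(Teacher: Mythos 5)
Your proof is correct, but it takes a genuinely different route from the paper's. The paper gives no detailed argument at all: the statement is introduced with ``Considering Corollary~\ref{KLaw2} and Definition~\ref{reduced_Rauzy_graph}, one may observe the following,'' so the intended justification rests on Corollary~\ref{KLaw2} (every factor of an aperiodic recurrent word has the same frequency as the shortest BS factor containing it), whose own validity is exactly what aperiodicity and recurrence are invoked for (extendability of every factor to a BS factor). You avoid Corollary~\ref{KLaw2} entirely and work length by length with the sets $\{\rho(w)\mid w\in\L_\ell(\mathbf u)\}$: stability across the BS-free range $\{n,\dots,m-1\}$ follows from Corollary~\ref{KLaw1} alone, and the only possible obstruction --- a factor of length $\ell+1$ whose length-$\ell$ prefix is RS and whose length-$\ell$ suffix is LS --- is killed by your middle-factor observation, namely that such a factor forces a BS factor of length $\ell-1$, contradicting the hypothesis. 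That combinatorial fact (suffixes of RS factors are RS, prefixes of LS factors are LS) is the key input and serves as a clean, elementary replacement for Corollary~\ref{KLaw2}. What your approach buys: it is fully self-contained, it localizes the use of aperiodicity and recurrence to the Remark identifying edge labels of $\tilde\Gamma_k$ with frequencies of factors of length $k+1$, and it makes explicit precisely where the BS-free hypothesis is used (at lengths $\ell$ and $\ell-1$ in the two inclusions). What the paper's route buys: brevity, and a conceptual explanation --- reusable elsewhere in the paper --- of why only BS factors can change the set of frequencies as the order grows.
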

\section{Symmetries preserving factor frequency}
We will be interested in symmetries preserving in a~certain way factor occurrences in $\mathbf u$ and consequently, frequencies of factors of $\mathbf u$.
Let us call a~{\em symmetry} on ${\mathcal A}^*$ any mapping $\Psi$ satisfying the following two properties:
\begin{enumerate}
\item $\Psi$ is a~bijection: ${\mathcal A}^{*}\to {\mathcal A}^{*}$,
\item for all $w,v \in {\mathcal A}^{*}$
$$\#\{\text{occurrences of $w$ in $v$}\}=\#\{\text{occurrences of $\Psi(w)$ in $\Psi(v)$}\}.$$
\end{enumerate}
The following statements are taken from~\cite{Ba}.
\begin{thm}\label{reflection_letterpermutation}
Let $\Psi: {\mathcal A}^*\to {\mathcal A}^*$. Then $\Psi$ is a~symmetry if and only if $\Psi$ is a~morphism or an antimorphism such that $\Psi$ is a~letter permutation when restricted to $\mathcal A$.
\end{thm}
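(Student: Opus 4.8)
The plan is to prove both implications, the reverse one being routine and the forward one carrying all the content. For the ``if'' direction, suppose $\Psi$ is a morphism (resp.\ antimorphism) whose restriction to $\mathcal{A}$ is a permutation $\pi$. Then $\Psi$ acts on a word by relabelling each letter through $\pi$ (resp.\ relabelling and then reversing), so it is a length-preserving bijection whose inverse is of the same type. Since relabelling is a position-wise bijection and reversal is an order-reversing position bijection, occurrences of $w$ in $v$ correspond bijectively to occurrences of $\Psi(w)$ in $\Psi(v)$; hence property (2) holds and $\Psi$ is a symmetry. Throughout write $c(w,v)$ for $\#\{\text{occurrences of }w\text{ in }v\}$.

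For the ``only if'' direction, the first observation is that the identity $c(w,v)=c(\Psi(w),\Psi(v))$ is stable under inversion: setting $w'=\Psi(w),\,v'=\Psi(v)$ shows $\Psi^{-1}$ is a symmetry too, and symmetries are evidently closed under composition. In particular $w$ is a factor of $v$ if and only if $\Psi(w)$ is a factor of $\Psi(v)$, so the factors of $\Psi(v)$ are exactly $\{\Psi(w): w\text{ a factor of }v\}$. I would then recover $\Psi$ on short words from purely combinatorial descriptions: the empty word is the unique word that is a factor of every word, so $\Psi(\varepsilon)=\varepsilon$; and a word is a single letter precisely when it is nonempty and its only nonempty factor is itself. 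Both conditions are phrased solely through the factor relation and emptiness, which $\Psi$ preserves, so $\Psi$ maps $\mathcal{A}$ into $\mathcal{A}$; applying the same to $\Psi^{-1}$ shows $\pi:=\Psi|_{\mathcal{A}}$ is a permutation of $\mathcal{A}$. Length preservation then drops out of $|v|=\sum_{a\in\mathcal{A}}c(a,v)=\sum_{a\in\mathcal{A}}c(\pi(a),\Psi(v))=|\Psi(v)|$.

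Next I would remove the letter permutation. Let $\Pi$ be the morphism extending $\pi$ letterwise; then $\Phi:=\Pi^{-1}\circ\Psi$ is a symmetry fixing every letter, and it suffices to show that such a $\Phi$ equals either the identity or the reversal antimorphism $R$, since $\Psi=\Pi\circ\Phi$ then yields the two advertised forms. For a two-letter word $ab$ with $a\neq b$ the letter counts give $|\Phi(ab)|=2$ with exactly one $a$ and one $b$, so $\Phi(ab)\in\{ab,ba\}$ (and $\Phi(aa)=aa$). The crux is to show this choice is globally coherent. Injectivity already forces $\Phi(ab)=ab\Leftrightarrow\Phi(ba)=ba$, and analysing $\Phi$ on a three-letter word $abc$ with distinct letters---whose length-two factor multiset must equal $\{\Phi(ab),\Phi(bc)\}$, which pins $\Phi(abc)\in\{abc,cba\}$---forces the orientations of $ab$ and $bc$ to agree. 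Chaining these equalities across the alphabet yields one global sign: either $\Phi(ab)=ab$ for all pairs, or $\Phi(ba)=ab$ for all pairs.

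Finally, assuming the ``$+$'' sign, I would prove $\Phi=\mathrm{id}$ by induction on $n=|v|$, the cases $n\le 2$ holding by construction. For $n\ge 3$ the factors of $\Phi(v)$ of any length $k<n$ are the $\Phi$-images of those of $v$, which by the induction hypothesis are unchanged; hence $\Phi(v)$ shares with $v$ both the multiset of its two length-$(n-1)$ windows (its prefix $P$ and suffix $S$) and the multiset of its length-two factors. The window condition leaves at most two candidates for $\Phi(v)$: the word $v$ itself, or the at-most-one further length-$n$ word whose length-$(n-1)$ prefix is $S$ and suffix is $P$; a direct check shows this alternative has a different multiset of length-two factors---hence is excluded---unless $v$ is a constant word, in which case it coincides with $v$. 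Thus $\Phi(v)=v$. The ``$-$'' sign reduces to this by replacing $\Phi$ with the symmetry $R\circ\Phi$, giving $\Phi=R$. The main obstacle throughout is precisely this coherence argument: establishing a single global orientation on pairs and ruling out the window-swap ambiguity in the induction; everything else is bookkeeping with the factor relation and letter counts.
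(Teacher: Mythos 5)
The paper contains no proof of this theorem to compare against: it is imported verbatim from~\cite{Ba} (``The following statements are taken from~\cite{Ba}''), so your argument can only be judged on its own. On that basis it is essentially correct and complete: the ``if'' direction, the closure of symmetries under inversion and composition, the characterization of $\varepsilon$ and of letters purely via the factor relation, length preservation via letter counts, the reduction to a symmetry $\Phi$ fixing every letter, the coherence of orientations of two-letter words via three-letter words $abc$ (the mixed sets $\{ab,cb\}$ and $\{ba,bc\}$ are indeed unrealizable as the window set of a word on three distinct letters), and the prefix/suffix-window induction are all sound.

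One detail in the last step needs a patch. You claim the ``swapped'' candidate (prefix $S$, suffix $P$) is excluded because it has a different multiset of length-two factors unless $v$ is constant. The swap forces $v$ to be an alternating word in two letters $a\neq b$, and your claim is then true for even $|v|$ (e.g.\ $abab$ versus $baba$: the counts of $ab$ differ), but false for odd $|v|$: for $v=ababa$ the swapped word is $babab$, and both have exactly two occurrences of $ab$ and two of $ba$; what differs there is the \emph{letter} counts ($3$ versus $2$ occurrences of $a$). Since you have already established that $\Phi$ fixes letters and hence preserves letter counts, the exclusion still goes through in the odd case --- you just need to invoke length-one counts rather than length-two counts there. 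With that one-line correction the proof is a valid, self-contained alternative to the (uncited here) argument of~\cite{Ba}.
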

\begin{obs}\label{same_frequency}
Let $\mathbf u$ be an infinite word whose language is invariant under a~symmetry $\Psi$.
For every $w$ in $\L(\mathbf u)$ whose frequency exists, it holds
$$\rho(w)=\rho(\Psi(w)).$$
\end{obs}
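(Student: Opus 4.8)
The plan is to unfold the definition of $\rho(\Psi(w))$ and re-index the limit by exploiting the fact that $\Psi$ acts as a length-preserving bijection on the language of $\mathbf u$. The two defining properties of a symmetry, together with the structural characterization in Theorem~\ref{reflection_letterpermutation}, supply exactly what is needed.

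First I would record a consequence of Theorem~\ref{reflection_letterpermutation}: since $\Psi$ is a morphism or an antimorphism whose restriction to $\mathcal A$ is a permutation of letters, every single letter is sent to a single letter, and hence $|\Psi(v)|=|v|$ for all $v \in \mathcal A^*$. Thus $\Psi$ preserves length. Combined with the invariance of $\L(\mathbf u)$ under $\Psi$ and the bijectivity from property~(1), the map $\Psi$ restricts to a bijection of $\L(\mathbf u)$ onto itself that preserves the length of factors.

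Next I would start from the definition of frequency applied to $\Psi(w)$, namely
$$\rho(\Psi(w))=\lim_{|v|\to\infty,\, v\in\L(\mathbf u)}\frac{\#\{\text{occurrences of }\Psi(w)\text{ in }v\}}{|v|}\,,$$
and substitute $v=\Psi(v')$. Because $\Psi$ is a length-preserving bijection of $\L(\mathbf u)$, as $v'$ runs through $\L(\mathbf u)$ with $|v'|\to\infty$ the word $v=\Psi(v')$ runs through $\L(\mathbf u)$ with $|v|=|v'|\to\infty$, so the limiting net is unchanged under this substitution. Property~(2) of a symmetry then gives $\#\{\text{occurrences of }\Psi(w)\text{ in }\Psi(v')\}=\#\{\text{occurrences of }w\text{ in }v'\}$, and since $|v|=|v'|$ the quotient inside the limit becomes exactly the quotient defining $\rho(w)$. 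Hence the limit defining $\rho(\Psi(w))$ exists and equals $\rho(w)$.

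The only delicate point is the re-indexing of the limit: I must be sure that replacing $v$ by $\Psi(v')$ does not alter which factors, or which lengths, appear in the limiting net. This is guaranteed precisely by the two facts assembled in the first step, namely that $\Psi$ is a bijection of $\L(\mathbf u)$ and that it preserves length, so that no factor is lost, duplicated, or shifted into a different length class. Once this is in place, the remaining manipulations amount to a direct substitution, and I do not expect any serious obstacle beyond making the length-preservation argument explicit.
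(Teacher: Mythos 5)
Your proof is correct. Note that the paper itself gives no argument for this observation --- it is quoted from~\cite{Ba} --- so there is nothing to diverge from; your re-indexing of the limit via the substitution $v=\Psi(v')$, combined with property~(2) of a symmetry and the length preservation coming from Theorem~\ref{reflection_letterpermutation}, is exactly the natural proof. One small point deserves to be made explicit: if ``invariant'' is read as $\Psi(\L(\mathbf u))\subseteq\L(\mathbf u)$, then the surjectivity you need (every $v\in\L_n(\mathbf u)$ is of the form $\Psi(v')$ with $v'\in\L_n(\mathbf u)$) does not follow from bijectivity of $\Psi$ on $\mathcal A^*$ alone; it follows because $\Psi$ maps the \emph{finite} set $\L_n(\mathbf u)$ injectively into itself, hence onto itself. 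With that line added, the argument is complete.
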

We denote the set of all morphisms and antimorphisms on ${\mathcal A}^*$ by $AM(\mathcal A^*)$.
\begin{thm}\label{UpperBoundMoreSymmetries} Let $G \subset AM({\mathcal A}^*)$ be a~finite group containing an antimorphism and let $\mathbf u$ be a~uniformly recurrent aperiodic infinite word whose language is invariant under all elements of $G$ and such that the frequency $\rho(w)$ exists for every factor $w \in \L(\mathbf u)$.
Then there exists $M \in \mathbb N$ such that
\[\# \{\rho(e)\mid  e
\in \L_{n+1}(\mathbf u) \}\quad \leq \quad \frac{1}{\#G}\Bigl(4\bigl(\C(n+1)-\C(n)\bigr)+\#G- X-Y \Bigr) \quad \quad \text{for all $n \geq M$},\]
where $X$ is the number of BS factors of length $n$ and $Y$ is
the number of BS factors of length $n$ that are $\theta$-palindromes for an antimorphism $\theta \in G$.
\end{thm}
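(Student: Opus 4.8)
The plan is to pass to the reduced Rauzy graph $\tilde\Gamma_n$ and count distinct edge labels. By the Remark following Definition~\ref{reduced_Rauzy_graph}, the set $\{\rho(e)\mid e\in\L_{n+1}(\mathbf u)\}$ is exactly the set of edge labels of $\tilde\Gamma_n$, so it suffices to bound the number of distinct labels. By Theorem~\ref{reflection_letterpermutation} every element of $G$ is a morphism or antimorphism acting as a letter permutation; hence it preserves factor length, sends special factors to special factors (interchanging left and right special factors when it is an antimorphism), sends bispecial factors to bispecial factors, and sends non-special factors to non-special factors. Consequently $G$ acts on the edge set of $\tilde\Gamma_n$, an antimorphism reversing the orientation of each simple path. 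By Observation~\ref{same_frequency} all edges in one $G$-orbit carry the same label, so the number of distinct labels is at most the number of $G$-orbits of edges, which by the orbit-counting (Burnside) formula equals $\frac{1}{\#G}\sum_{g\in G}\mathrm{Fix}(g)$, where $\mathrm{Fix}(g)$ is the number of edges fixed by $g$. The identity contributes the total number $E$ of edges.

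Next I would count $E$. Reading a simple path off the right extension that starts it, the out-degree of a vertex $w$ of $\tilde\Gamma_n$ equals its number of right extensions, so $E=\sum_{w}\deg^+(w)$ over special $w$. Splitting off the contribution $\sum_{w\ \mathrm{RS}}(\deg^+(w)-1)=\C(n+1)-\C(n)$ and using inclusion–exclusion on the left- and right-special vertices gives $E=(\C(n+1)-\C(n))+\#\mathrm{RS}+\#\mathrm{LS}-X$, where $X$ is the number of bispecial factors of length $n$. Since each right special (resp. left special) factor has out-degree (resp. in-degree) at least two, both $\#\mathrm{RS}$ and $\#\mathrm{LS}$ are bounded by $\C(n+1)-\C(n)$, whence $E\le 3(\C(n+1)-\C(n))-X$. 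This is where the term $-X$ enters.

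It remains to bound $\sum_{g\ne\mathrm{id}}\mathrm{Fix}(g)$. For a non-identity morphism $g$ with underlying letter permutation $\pi$, an edge $e$ with $g(e)=e$ must use only letters fixed by $\pi$; since $\pi$ moves at least one letter and $\mathbf u$ is uniformly recurrent and aperiodic, there is $M$ such that for $n\ge M$ no factor of length $n$ avoids the moved letters, so such $g$ fix no edge. Choosing $M$ to serve simultaneously the finitely many non-identity morphisms of $G$ kills all their contributions. For an antimorphism $\theta$ the fixed edges are exactly the $\theta$-palindromic simple paths; each such path is a $\theta$-palindrome whose centre is a non-special $\theta$-palindrome (by Definition~\ref{simple_path} a special factor cannot lie in the interior of a simple path), and I would bound their number by the number of $\theta$-palindromic factors of the relevant central lengths, from which the $\theta$-palindromic bispecial factors must be excluded since they are special. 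Carried out over all antimorphisms of $G$, this is meant to yield $\sum_{g\ne\mathrm{id}}\mathrm{Fix}(g)\le(\C(n+1)-\C(n))+\#G-Y$, where $Y$ counts the $\theta$-palindromic bispecial factors of length $n$.

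Combining the two bounds, the numerator of the orbit count is at most $\bigl(3(\C(n+1)-\C(n))-X\bigr)+\bigl((\C(n+1)-\C(n))+\#G-Y\bigr)=4(\C(n+1)-\C(n))+\#G-X-Y$; dividing by $\#G$ gives the claim for $n\ge M$. The main obstacle is the final antimorphism count: identifying the centres of $\theta$-palindromic simple paths, controlling their number by the palindromic structure of $\mathbf u$, and establishing precisely that the $\theta$-palindromic bispecial factors (the quantity $Y$) drop out, so that the constant is $4$ rather than something larger. The morphism step, by contrast, only fixes the threshold $M$ and is the place where uniform recurrence and aperiodicity are used.
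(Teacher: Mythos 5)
The first thing to note is that the paper itself contains no proof of Theorem~\ref{UpperBoundMoreSymmetries}: it is quoted from~\cite{Ba} (``The following statements are taken from~\cite{Ba}''), so your attempt can only be judged against what a complete proof must contain. Your framework is the right one, and it is the natural route: pass to the reduced Rauzy graph $\tilde\Gamma_n$ (the Remark after Definition~\ref{reduced_Rauzy_graph} identifies frequencies of factors of length $n+1$ with edge labels), let $G$ act on the edge set (symmetries preserve the language and speciality, antimorphisms exchanging LS and RS and reversing simple paths), note via Observation~\ref{same_frequency} that labels are constant on orbits, and apply Burnside. Your identity count is also correct: $E=\bigl(\mathcal C(n+1)-\mathcal C(n)\bigr)+\#\mathrm{RS}+\#\mathrm{LS}-X\le 3\bigl(\mathcal C(n+1)-\mathcal C(n)\bigr)-X$, and non-identity morphisms fix no edge once $n$ exceeds a threshold $M$ given by uniform recurrence.

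The genuine gap is the antimorphism count, which you yourself flag as ``the main obstacle'' and never carry out --- and it is not a routine verification but the heart of the theorem. Two things are missing. First, the exact fixed-edge bookkeeping: the centre of a $\theta$-palindromic simple path has length $n$ or $n+1$ according to parity, and a simple path of length exactly $n+1$ is its own centre with \emph{special} prefix and suffix, so your claim that the centre is always a non-special $\theta$-palindrome is false as stated. The correct identity is $\mathrm{Fix}(\theta)=\mathcal P_\theta(n)+\mathcal P_\theta(n+1)-\#\{\text{$\theta$-palindromic BS factors of length }n\}$, where $\mathcal P_\theta(m)$ denotes the number of $\theta$-palindromic factors of length $m$; here one uses that a special $\theta$-palindrome is automatically BS, which is exactly how $Y$ enters (summing over $\theta$ gives $\sum_\theta\#\{\text{$\theta$-palindromic BS}\}\ge Y$). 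Second, and decisively, your target inequality $\sum_{g\ne\mathrm{id}}\mathrm{Fix}(g)\le\bigl(\mathcal C(n+1)-\mathcal C(n)\bigr)+\#G-Y$ then requires $\sum_{\theta}\bigl(\mathcal P_\theta(n)+\mathcal P_\theta(n+1)\bigr)\le \mathcal C(n+1)-\mathcal C(n)+\#G$ for all large $n$, the sum running over the antimorphisms of $G$. This is a nontrivial theorem of Pelantov\'a and Starosta (see~\cite{PeSt}), generalizing the Bal\'a\v{z}i--Mas\'akov\'a--Pelantov\'a palindromic-complexity bound, and it is proved by a separate connectivity argument on Rauzy graphs; it is precisely where the constant $4$ and the additive term $\#G$ come from, and it does not follow from merely ``counting $\theta$-palindromic factors of the relevant central lengths''. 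That the inequality is sharp, hence not obtainable by cruder estimates, can be seen already on the Thue--Morse word with $n=3$: there $\sum_\theta\bigl(\mathcal P_\theta(3)+\mathcal P_\theta(4)\bigr)=(2+2)+(0+4)=8$ while $\mathcal C(4)-\mathcal C(3)+\#G=4+4=8$. Without proving or invoking this result, your argument does not close.
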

\section{Factor frequencies of generalized Thue-Morse words}\label{ChFrequencyGeneralizedTM}
The generalized Thue-Morse word ${\mathbf t}_{b,m}$ is defined for $b \geq 2, \ m\geq 1, \ b,m \in \mathbb N$, as the
fixed point starting in $0$ of the morphism
\begin{equation}\label{TMmorphism}\varphi_{b,m}: k \ \to \ k(k+1)\dots(k+b-1),
\end{equation} where $k \in {\mathbb Z}_m=\{0,1,\dots, m-1\}$ and where the letters are expressed modulo $m$.
We denote by $q$ the smallest positive integer satisfying $q(b-1)=0\mod m$. The word ${\mathbf t}_{b,m}$ is periodic if and only if $b=1\mod m$ (see~\cite{AlSh}).
In this case, ${\mathbf t}_{b,m}=(01\dots (m-1))^{\omega}$, where $\omega$ denotes an infinite repetition. It is thus readily seen that any factor of ${\mathbf t}_{b,m}$ has its frequency equal to $\frac{1}{m}$.

\noindent Properties of $\varphi_{b,m}$ and ${\mathbf t}_{b,m}$:
\begin{enumerate}
\item $\varphi_{b,m}$ is primitive, therefore letter frequencies exist and are equal to the components of the eigenvector $\frac{1}{m}(1,1,\dots,1)^T$ of the incidence matrix corresponding to the dominant eigenvalue $b$,
\item $\varphi_{b,m}$ is uniform ($|\varphi_{b,m}(k)|=b$ for all $k \in {\mathbb Z}_m$),
\item $\varphi_{b,m}$ is marked,
\item ${\mathbf t}_{b,m}$ is circular with synchronization delay $L=2b$
\begin{proof}
 Any $w \in {\mathcal L}({\mathbf t}_{b,m})$ of length greater than or equal to $2b$ contains either for some $k, \ell \in {\mathbb Z}_m$ a~factor $k\ell$, where $l \not =k+1 \mod m$, or is of length $2b$ and of the form $w=k(k+1)\dots (k+2b-1)$ for some $k \in {\mathbb Z}_m$.
 \begin{enumerate}
 \item In the first case, it is easy to see that $k$ marks the end of $w_1$ and $\ell$ the beginning of $w_2$ in the synchronization point $(w_1, w_2)$ of $w$.
    \item In the second case, $(w,\varepsilon)$ is a~synchronization point of $w=k(k+1)\dots (k+2b-1)$.
    \end{enumerate}
\end{proof}
\item ${\mathcal L}({\mathbf t}_{b,m})$ is invariant under $D_m= \{\Pi_x\bigm | x \in {\mathbb Z}_m\} \cup \{\Psi_x\bigm | x \in {\mathbb Z}_m\},$
    where $\Pi_x$ is a~morphism and $\Psi_x$ an antimorphism defined for all $k \in {\mathbb Z}_m$ by
    $$\begin{array}{rcl}
    \Pi_x(k)&=&x+k, \\ \Psi_x(k)&=&x-k.\end{array}$$
For the proof see~\cite{St}.
\end{enumerate}

The aim of this section is to describe $\{\rho(e)\bigm |e \in {\mathcal L}_{n+1}({\mathbf t}_{b,m})\}$
for all $n \in \mathbb N$. Theorem~\ref{explicit} gives explicit formulae for factor frequencies if
the frequencies of factors of length $n \in \{1,\dots, 2b+1\}$ (the structure ordering number $K=3$ for ${\mathbf t}_{b,m}$) are known.
There is even an easier way to get factor frequencies using symmetries of ${\mathcal L}({\mathbf t}_{b,m})$ and the description of BS factors from~\cite{St}.
\begin{prop}\label{BS_TM}
If $v$ is a~BS factor of ${\mathcal L}({\mathbf t}_{b,m})$ of length greater than or equal to $2b$, then there exists a~BS factor $w \in {\mathcal L}({\mathbf t}_{b,m})$ such that $v=\varphi_{b,m}(w)$. Moreover, $\rho(v)=\frac{\rho(w)}{b}$.
\end{prop}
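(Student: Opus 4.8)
The plan is to exploit the circularity of $\mathbf{t}_{b,m}$ together with the marked uniform structure of $\varphi_{b,m}$, splitting the argument into two parts: first showing that a long BS factor $v$ is the image $\varphi_{b,m}(w)$ of some factor $w$, then checking that this preimage $w$ is itself bispecial, and finally reading off the frequency relation. Since $\mathbf{t}_{b,m}$ is circular with synchronization delay $L=2b$ and $|v|\geq 2b$, the factor $v$ is circular, so by Proposition~\ref{unique_interpretation} it admits a \emph{unique} interpretation $(a(s),i,j)$; the crux will be to argue that for a BS factor this interpretation must be \emph{synchronized}, i.e.\ $i=j=0$, so that $v=\varphi_{b,m}(w)$ exactly with $w=a(s)$, with no letters erased at either end.

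First I would establish the alignment $i=j=0$. The key observation is that $v$ being left special means it has at least two distinct left extensions $c_1 v, c_2 v \in \L(\mathbf{t}_{b,m})$. If the unique interpretation of $v$ started strictly inside a letter image (i.e.\ $i>0$), then the letter of $\mathbf{t}_{b,m}$ immediately to the left of the occurrence of $v$ would be forced by the ancestor together with the synchronization point, contradicting the existence of two distinct left extensions. More precisely, because $\varphi_{b,m}$ is marked, knowing the partial letter image $\varphi_{b,m}(b_0)$ truncated by $i$ from the left determines $b_0$ uniquely, and hence determines the letter preceding $v$ whenever $i>0$; this would make $v$ not left special. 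So $i=0$. A symmetric argument using right specialness and the fact that a marked morphism determines a letter image from its truncated suffix forces $j=0$. Hence $v=\varphi_{b,m}(w)$ where $w=a(s)=b_0b_1\cdots b_r$ is the (now exact) ancestor.

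Next I would show $w$ is itself a BS factor. Since $v=\varphi_{b,m}(w)$ is left special, pick two left extensions $c_1 v, c_2 v$. The synchronization point at the left boundary of $v$ means each $c_i$ is the last letter of a full image $\varphi_{b,m}(d_i)$, and because $\varphi_{b,m}$ is marked, distinct last letters $c_1\neq c_2$ force distinct letters $d_1\neq d_2$; moreover $d_i w \in \L(\mathbf{t}_{b,m})$, so $w$ is left special. The analogous argument on the right, using that distinct first letters of the right extensions come from distinct letters $e_i$ with $w e_i \in \L(\mathbf{t}_{b,m})$, shows $w$ is right special. Therefore $w$ is bispecial, as claimed. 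The frequency relation $\rho(v)=\frac{\rho(w)}{b}$ is then immediate from Proposition~\ref{unique_interpretation}, since $v$ is circular with unique ancestor $w$ and the letter image length is $b$.

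The main obstacle I anticipate is the careful handling of the boundary alignment, namely ruling out $i>0$ or $j>0$ cleanly. One must be sure that the markedness of $\varphi_{b,m}$ genuinely pins down the truncated boundary letter \emph{and} that the implied determinacy of the adjacent letter of $\mathbf{t}_{b,m}$ contradicts specialness rather than merely being consistent with it; the subtle point is that specialness concerns the letter immediately outside $v$, so one has to trace how a nonzero truncation propagates the forced letter across the image boundary. I would treat this by explicitly distinguishing whether the extension letter lies in the same truncated image block as the boundary of $v$ or in the adjacent full image, and in each case invoke markedness to derive the contradiction.
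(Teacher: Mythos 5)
Your proposal is correct, but it takes a genuinely different route from the paper. The paper's proof is essentially a citation: the existence of a BS factor $w$ with $v=\varphi_{b,m}(w)$ is taken verbatim from Lemma~3 of Starosta~\cite{St}, and only the frequency relation $\rho(v)=\frac{\rho(w)}{b}$ is derived, exactly as you do, from Proposition~\ref{unique_interpretation}. You instead re-prove the structural part from scratch: circularity of ${\mathbf t}_{b,m}$ with delay $L=2b$ gives a unique interpretation of $v$; left (resp.\ right) specialness forces the truncation parameters $i=0$ (resp.\ $j=0$); and specialness is then pulled back to the ancestor. This buys self-containedness, and your argument in fact establishes the statement for any circular fixed point of a uniform marked primitive morphism, not just for $\varphi_{b,m}$; the paper's version is shorter but opaque without consulting~\cite{St}. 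Two small misattributions in your write-up are worth repairing, though neither is a gap. First, in the alignment step the preceding letter is forced not because markedness lets you recover $b_0$ from a left-truncated image (that implication is false for general marked morphisms, and holds for $\varphi_{b,m}$ only because its letter images are arithmetic progressions), but simply because the interpretation is unique: every occurrence of $v$ in ${\mathbf t}_{b,m}=\varphi_{b,m}({\mathbf t}_{b,m})$ induces an interpretation whose ancestor is a factor of ${\mathbf t}_{b,m}$, so all occurrences share the same ancestor and offsets, and for $i>0$ the letter preceding $v$ is always the fixed letter at position $i-1$ of $\varphi_{b,m}(b_0)$, contradicting left specialness. Second, in the pull-back step the implication $c_1\neq c_2 \Rightarrow d_1\neq d_2$ needs no markedness at all---it is the contrapositive of ``equal letters have equal images''; markedness would be needed for the converse direction (that the image of a special factor is special), which your argument never uses.
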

\begin{proof}
The first part has been proved as Lemma~3 in~\cite{St}. The second part follows from Proposition~\ref{unique_interpretation}.
\end{proof}

\noindent{\bf Reduced Rauzy graph method (RRG method)}\\
We get the frequencies $\{\rho(e)\bigm |e \in {\mathcal L}_{n+1}({\mathbf t}_{b,m})\}$ for all $n \in \mathbb N$ in the following way.
\begin{enumerate}
\item We describe reduced Rauzy graphs of order $n$, where $1\leq n \leq 2b-1$, making use of the invariance of ${\mathcal L}({\mathbf t}_{b,m})$ under symmetries. We notice that all of them contain a~BS factor as a~vertex.
\item Proposition~\ref{BS_TM} says that every BS factor is of length $b^{\ell}n$, $\ell \in \mathbb N$, where $n \in \{1,\dots, 2b-1\}$.
It is not difficult to see that all reduced Rauzy graphs of order greater than or equal to $2b$ containing a~BS factor as their vertex are obtained by a~repeated application of $\varphi_{b,m}$ simultaneously to vertices and edges of reduced Rauzy graphs of order $n$, where $2\leq n \leq 2b-1$. By Proposition~\ref{BS_TM}, the reduced Rauzy graph of order $nb^{\ell}$ obtained when $\varphi_{b,m}$ is applied $\ell$ times to the reduced Rauzy graph of order $n$, where $2\leq n \leq 2b-1$, satisfies
$$\begin{array}{rcl}
\{\rho(e) \bigm | e \ \text{edge in} \ \tilde{\Gamma}_{nb^{\ell}}\}&=&\{\frac{1}{b^{\ell}}\rho(e) \bigm | e \ \text{edge in} \ \tilde{\Gamma}_{n}\}, \\
\{\rho(v) \bigm | v \ \text{BS vertex in} \ \tilde{\Gamma}_{nb^{\ell}}\}&=&\{\frac{1}{b^{\ell}}\rho(v) \bigm | v \ \text{BS vertex in} \ \tilde{\Gamma}_{n}\}.
\end{array}$$
\item Applying Observation~\ref{StaciGrafyBS}, we obtain
\begin{enumerate}
\item If $(n-1)b^{\ell}<N<nb^{\ell}$ for some $n \in \{2,\dots, 2b\}$, then $$\{\rho(e)\bigm |e \in {\mathcal L}_{N+1}({\mathbf t}_{b,m})\}=\{\frac{1}{b^{\ell}}\rho(e)\bigm |e \ \text{edge in} \ \tilde{\Gamma}_{n}\}\cup \{\frac{1}{b^{\ell}}\rho(v)\bigm |v \ \text{BS vertex in} \ \tilde{\Gamma}_{n}\}.$$
\item If $N=nb^{\ell}$ for some $n \in \{2,\dots, 2b-1\}$, then $$\{\rho(e)\bigm |e \in {\mathcal L}_{N+1}({\mathbf t}_{b,m})\}=\{\frac{1}{b^{\ell}}\rho(e)\bigm |e \ \text{edge in} \ \tilde{\Gamma}_{n}\}.$$
\end{enumerate}
\end{enumerate}
\begin{ex}
Let us illustrate the RRG method for the Thue-Morse word ${\mathbf t}_{2,2}$.
\begin{enumerate}
\item
\begin{figure}[!h]\label{RauzyIllustration}
\begin{center}
\resizebox{12 cm}{!}{\includegraphics{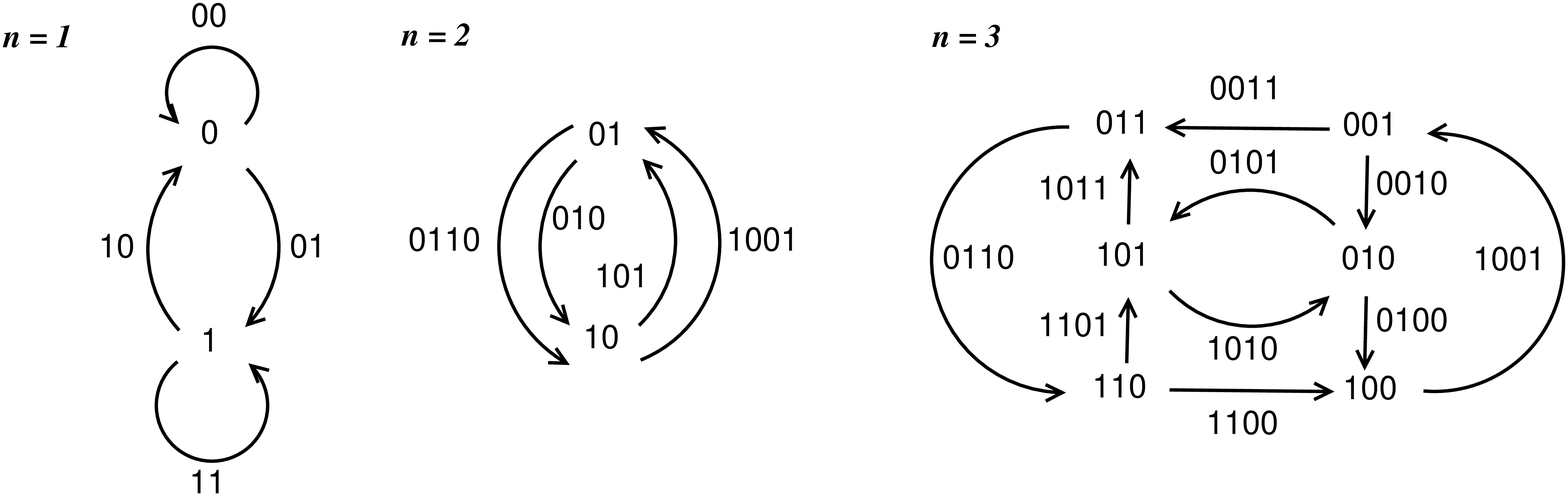}}
\end{center}
\caption{Reduced Rauzy graphs of ${\mathbf t}_{2,2}$ of order $n \in \{1,2,3\}$.}
\end{figure}
\noindent $\tilde{\Gamma}_1$: $\rho(0)=\rho(1)=\frac{1}{2}$ and $\{\rho(e)\bigm | e \ \text{edge in} \ \tilde{\Gamma}_1\}=\{\frac{1}{3}, \frac{1}{6}\}$.\\
\noindent Explanation:
\begin{itemize}
\item Thanks to Observation~\ref{same_frequency}, we have $\rho(0)=\rho(1)$, $\rho(01)=\rho(10)$, and $\rho(00)=\rho(11)$.
\item Using Property $(1)$ of $\varphi_{b,m}$ and ${\mathbf t}_{b,m}$, we get $\rho(0)=\frac{1}{2}$.
\item By Corollary~\ref{KLaw1} and Proposition~\ref{unique_interpretation}, it holds $\rho(00)=\rho(1001)=\frac{1}{2}\rho(10)$.
\item Applying the Kirchhoff's law for frequencies, we get $\rho(0)=\rho(01)+\rho(00)=\frac{3}{2}\rho(01)$, consequently $\rho(01)=\frac{1}{3}$.
\end{itemize}
\noindent $\tilde{\Gamma}_2$: $\rho(01)=\rho(10)=\frac{1}{3}$ and $\{\rho(e)\bigm | e \ \text{edge in} \ \tilde{\Gamma}_2\}=\{\frac{1}{6}\}$.\\
\noindent Explanation:
\begin{itemize}
\item Thanks to Observation~\ref{same_frequency}, we have $\rho(010)=\rho(101)$ and $\rho(0110)=\rho(1001)$.
\item By Proposition~\ref{unique_interpretation}, it holds $\rho(0110)=\frac{1}{2}\rho(01)=\frac{1}{6}$.
\item Applying the Kirchhoff's law for frequencies, we get $\rho(01)=\rho(010)+\rho(0110)$. Therefore $\rho(010)=\frac{1}{6}$.
\end{itemize}
\noindent $\tilde{\Gamma}_3$: $\rho(010)=\rho(101)=\frac{1}{6}$ and $\{\rho(e)\bigm | e \ \text{edge in} \ \tilde{\Gamma}_3\}=\{\frac{1}{6}, \frac{1}{12}\}$.\\
\noindent Explanation:
\begin{itemize}
\item Thanks to Observation~\ref{same_frequency}, we have $\rho(011)=\rho(100)=\rho(001)=\rho(110)$, $\rho(0011)=\rho(1100)$, $\rho(0101)=\rho(1010)$, $\rho(0010)=\rho(1101)=\rho(1011)=\rho(0100)$.
\item By Corollary~\ref{KLaw1} and Proposition~\ref{unique_interpretation}, it holds $\rho(0010)=\rho(100101)=\frac{1}{2}\rho(100)=\frac{1}{2}\rho(1001)=\frac{1}{12}$ and $\rho(0011)=\rho(100110)=\frac{1}{2}\rho(101)=\frac{1}{12}$.
\item The Kirchhoff's law for frequencies implies $\rho(0101)=\rho(010)-\rho(0100)=\frac{1}{12}$.
\end{itemize}
\item All reduced Rauzy graphs of order greater than or equal to $4$ containing a~BS factor as their vertex are depicted in Figure~\ref{RauzyIllustration2}.
\begin{figure}[!h]\label{RauzyIllustration2}
\begin{center}
\resizebox{14 cm}{!}{\includegraphics{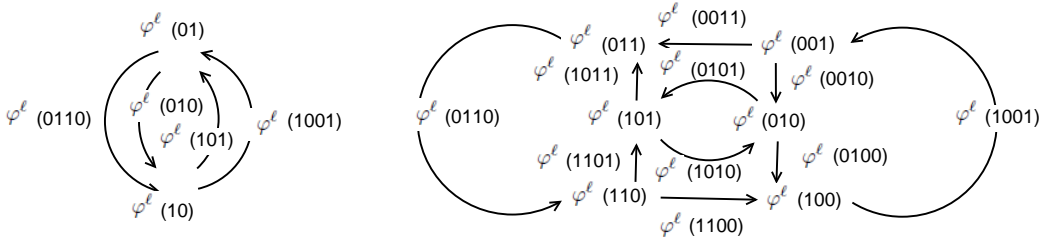}}
\end{center}
\caption{For any reduced Rauzy graph of ${\mathbf t}_{2,2}$ of order $n \geq 4$ containing a~BS factor as its vertex, there exists $\ell \geq 1$ such that the graph takes one of the depicted forms.}
\end{figure}
It holds for all $k \in \mathbb N$ $$\begin{array}{rcl}
\{\rho(e) \bigm | e \ \text{edge in} \ \tilde{\Gamma}_{2\cdot 2^{\ell}}\}&=&\{\frac{1}{2^{\ell}}\frac{1}{6}\}, \\
\{\rho(v) \bigm | v \ \text{BS vertex in} \ \tilde{\Gamma}_{2\cdot 2^{\ell}}\}&=&\{\frac{1}{2^{\ell}}\frac{1}{3}\}.
\end{array}$$
and $$\begin{array}{rcl}
\{\rho(e) \bigm | e \ \text{edge in} \ \tilde{\Gamma}_{3\cdot 2^{\ell}}\}&=&\{\frac{1}{2^{\ell}}\frac{1}{6}, \frac{1}{2^{\ell}}\frac{1}{12}\}, \\
\{\rho(v) \bigm | v \ \text{BS vertex in} \ \tilde{\Gamma}_{3\cdot 2^{\ell}}\}&=&\{\frac{1}{2^{\ell}}\frac{1}{6}\}.
\end{array}$$
\item The sets of factor frequencies $\{\rho(e)\bigm | e \in {\mathcal L}_{N+1}({\mathbf t}_{2,2})\}$  are of the following form for $N \in \mathbb N$.
\begin{enumerate}
\item $\{\rho(e)\bigm |e \in {\mathcal L}_{1}({\mathbf t}_{2,2})\}=\{\frac{1}{2}\}.$
\item $\{\rho(e)\bigm |e \in {\mathcal L}_{2}({\mathbf t}_{2,2})\}=\{\frac{1}{3}, \frac{1}{6}\}.$
\item If $2\cdot 2^{\ell}<N<3\cdot 2^{\ell}$ for some $\ell \in \mathbb N$, then $$\{\rho(e)\bigm |e \in {\mathcal L}_{N+1}({\mathbf t}_{2,2})\}=\left\{\frac{1}{2^{\ell}}\frac{1}{6}, \frac{1}{2^{\ell}}\frac{1}{12}\right\}\cup \left\{\frac{1}{2^{\ell}}\frac{1}{6}\right\}=\left\{\frac{1}{2^{\ell}}\frac{1}{6}, \frac{1}{2^{\ell}}\frac{1}{12}\right\}.$$
\item If $3\cdot 2^{\ell}<N<4\cdot 2^{\ell}$ for some $\ell \in \mathbb N$, then $$\{\rho(e)\bigm |e \in {\mathcal L}_{N+1}({\mathbf t}_{2,2})\}=\left\{\frac{1}{2^{\ell+1}}\frac{1}{6}\right\}\cup \left\{\frac{1}{2^{\ell+1}}\frac{1}{3}\right\}=\left\{\frac{1}{2^{\ell+1}}\frac{1}{3}, \frac{1}{2^{\ell+1}}\frac{1}{6}\right\}.$$
\item If $N=2\cdot 2^{\ell}$ for some $\ell \in \mathbb N$, then $$\{\rho(e)\bigm |e \in {\mathcal L}_{N+1}({\mathbf t}_{b,m})\}=\left\{\frac{1}{2^{\ell}}\frac{1}{6}\right\}.$$
\item If $N=3\cdot 2^{\ell}$ for some $\ell \in \mathbb N$, then $$\{\rho(e)\bigm |e \in {\mathcal L}_{N+1}({\mathbf t}_{b,m})\}=\left\{\frac{1}{2^{\ell}}\frac{1}{6}, \frac{1}{2^{\ell}}\frac{1}{12}\right\}.$$
\end{enumerate}
\end{enumerate}
\end{ex}
The RRG method says that it suffices to describe frequencies of edges and vertices being BS factors in reduced Rauzy graphs of order $n$, where $1\leq n \leq 2b-1$, in order to get $\{\rho(e) \bigm | e \in {\mathcal L}_{n+1}(e)\}$ for all $n \in \mathbb N$.
Using the description of BS factors from~\cite{St}, we obtain the form of reduced Rauzy graphs for $1 \leq n \leq 2b-1$.
\begin{enumerate}
\item For $1 \leq n \leq b$, the reduced Rauzy graph $\tilde \Gamma_n$ has $m$ vertices. All of them are BS factors of the form
$k(k+1)\dots (k+n-1)$. Since each of them is equal to $\Pi_k(01\dots(n-1)$, their frequencies are the same.
Moreover,
\begin{itemize}
\item $e$ is an edge ending in $01\dots (n-1)$ if and only if $\Pi_k(e)$ is an edge ending in $k(k+1)\dots(k+n-1)$,
\item $e$ is an edge ending in $01\dots (n-1)$ if and only if $\Psi_{k+n-1}(e)$ is an edge starting in $k(k+1)\dots \bigl(k+n-1)$,
\end{itemize}
and since $\rho(e)=\rho(\Pi_k(e))=\rho(\Psi_{k+n-1}(e))$, it suffices to describe frequencies of edges ending in $01\dots (n-1)$ in order to get all edge frequencies of $\tilde \Gamma_n$.
As shown in~\cite{St}, ${\rm Lext}(01\dots(n-1))=\{-1+k(b-1)\bigm | k \in \{0,1,\dots, q-1\}\}$.
\begin{lem}\label{freq_for_n_lessorequal_b}
Denote $f=\rho(01)$. Then $f=\frac{b^{q-1}}{m}\frac{b-1}{b^q-1}$ and for $1\leq n \leq b$, the frequencies of the vertex $w=01\dots (n-1)$ and of the edges ending in $w$ satisfy
$$
\begin{array}{rcll}
\rho(0)&=&\frac{1}{m},\\
\rho(01\dots(n-1))&=&(n-1)f-\frac{n-2}{m}& \text{for} \ n\geq 2,\\
\rho((-1)01\dots(n-1))&=&nf-\frac{n-1}{m},&\\
\rho((-1+k(b-1))01\dots (n-1))&=&\frac{1}{b^k}f& \text{for} \ k \in \{1,\dots, q-1\}.
\end{array}$$
\end{lem}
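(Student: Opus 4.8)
The plan is to extract every frequency from the synchronization structure of $\varphi_{b,m}$ combined with Kirchhoff's law (Observation~\ref{KLaw}) and the invariance of $\L({\mathbf t}_{b,m})$ under the morphism $\Pi_1$. The value $\rho(0)=\frac1m$ is immediate from Property~$(1)$. The organizing observation is that, since $\Pi_1$ preserves frequencies (Observation~\ref{same_frequency}) and $\Pi_1(k)=k+1$, we have
\[\rho\bigl((-1)\,01\dots(n-1)\bigr)=\rho\bigl(01\dots n\bigr)\qquad\text{and in particular}\qquad \rho\bigl((-1)\,0\bigr)=\rho(01)=f .\]
Thus the left extension of $01\dots(n-1)$ by the letter $-1$ has the same frequency as the vertex of order $n+1$, which is what will convert the computation of vertex frequencies into a recurrence in $n$.

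Next I would compute the frequencies of the remaining left extensions. Fix $k\in\{1,\dots,q-1\}$. By minimality of $q$ we have $k(b-1)\not\equiv 0 \pmod m$, so the pair $(-1+k(b-1))\,0$ is \emph{not} of the consecutive form $\ell(\ell+1)$; by the synchronization argument in the proof of Property~$(4)$ this pair therefore carries a synchronization point, and hence the factor $(-1+k(b-1))\,01\dots(n-1)$ is circular. Proposition~\ref{unique_interpretation} then applies and produces a unique interpretation whose ancestor has length $2$. A short modular computation identifies this ancestor as $(-1+(k-1)(b-1))\,0$: the first letter is the last letter of the image of $-1+(k-1)(b-1)$, while the suffix $01\dots(n-1)$ is the length-$n$ prefix of $\varphi_{b,m}(0)$ because $n\le b$. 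Consequently
\[\rho\bigl((-1+k(b-1))\,01\dots(n-1)\bigr)=\frac1b\,\rho\bigl((-1+(k-1)(b-1))\,0\bigr).\]
Iterating this identity downward in $k$, with base case $\rho((-1)\,0)=f$, gives $\rho\bigl((-1+k(b-1))\,0\bigr)=f/b^{k}$ and, more generally, $\rho\bigl((-1+k(b-1))\,01\dots(n-1)\bigr)=f/b^{k}$ independently of $n$; this is the last displayed formula of the statement.

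To determine $f$ itself I would substitute these values into Kirchhoff's law at the vertex $0$. Using ${\rm Lext}(0)=\{-1+k(b-1)\mid k\in\{0,\dots,q-1\}\}$,
\[\frac1m=\rho(0)=\sum_{k=0}^{q-1}\rho\bigl((-1+k(b-1))\,0\bigr)=f\sum_{k=0}^{q-1}b^{-k}=f\,\frac{b^{q}-1}{b^{q-1}(b-1)},\]
which rearranges to $f=\frac{b^{q-1}}{m}\,\frac{b-1}{b^{q}-1}$, as claimed.

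Finally, set $R_n=\rho(01\dots(n-1))$, so that $R_1=\frac1m$. Kirchhoff's law at the vertex $01\dots(n-1)$ reads $R_n=\rho\bigl((-1)\,01\dots(n-1)\bigr)+\sum_{k=1}^{q-1}f/b^{k}$; substituting $\rho((-1)\,01\dots(n-1))=R_{n+1}$ from the first paragraph and $f\sum_{k=0}^{q-1}b^{-k}=\frac1m$ from the previous step collapses this to the first-order recurrence $R_{n+1}=R_n+f-\frac1m$. Solving it with $R_1=\frac1m$ yields $R_n=(n-1)f-\frac{n-2}{m}$ and, after shifting the index, $\rho((-1)\,01\dots(n-1))=R_{n+1}=nf-\frac{n-1}{m}$, which are the two remaining formulae. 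I expect the only delicate point to be the second paragraph: pinning down the synchronization point and reading off the two-letter ancestor by modular arithmetic, together with the observation that Proposition~\ref{unique_interpretation} is applicable even though the factors in play have length at most $b+1<2b=L$, since circularity only requires the \emph{presence} of a synchronization point and not length at least $L$. The surrounding steps—geometric summation and a linear recurrence—are routine.
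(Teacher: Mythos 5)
Your proof is correct and follows essentially the same route as the paper's: the symmetry $\Pi_{\pm 1}$ identifies the frequency of the edge $(-1)01\dots(n-1)$ with that of the next vertex $01\dots n$, the synchronization point inside $(-1+k(b-1))0$ together with Proposition~\ref{unique_interpretation} gives the frequencies $f/b^k$, and Kirchhoff's law (Observation~\ref{KLaw}) at the vertex pins down $f$ and the remaining values. The only organizational difference is that the paper propagates the $k\ge 1$ edge frequencies by induction on $n$ via Corollary~\ref{KLaw1}, while you identify the length-two ancestor directly for each $n$ and then solve the resulting linear recurrence; the underlying facts invoked are identical.
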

\begin{proof} Let us proceed by induction on $n$.
Let $n=1$, then $\rho(0)=\frac{1}{m}$ by Property $(1)$ of $\varphi_{b,m}$ and ${\mathbf t}_{b,m}$.
It holds by Corollary~\ref{KLaw1} and Proposition~\ref{unique_interpretation} for $k \in \{1,\dots, q-1\}$ that $$\rho((-1+k(b-1))0)=\rho(\varphi(-1+(k-1)(b-1))0))=\frac{1}{b}\rho((-1+(k-1)(b-1))0).$$
Thus $\rho((-1+k(b-1))0)=\frac{1}{b^k}\rho((-1)0)=\frac{1}{b^k}\rho(\Pi_{-1}(01))=\frac{1}{b^k}f$.
Using Observation~\ref{KLaw}, we obtain
$f=\rho(0)-\sum_{k=1}^{q-1}\frac{f}{b^k}$. Therefore $$f=\frac{b^{q-1}}{m}\frac{b-1}{b^q-1}.$$

Let $1<n+1\leq b$. Assume
$$\begin{array}{rcll}
\rho(01\dots(n-1))&=&(n-1)f-\frac{n-2}{m}& \text{for} \ n\geq 2,\\
\rho((-1)01\dots(n-1))&=&nf-\frac{n-1}{m},&\\
\rho((-1+k(b-1))01\dots (n-1))&=&\frac{1}{b^k}f& \text{for} \ k \in \{1,\dots, q-1\}.
\end{array}$$
Then,
$\rho(01\dots n)=\rho(\Pi_{-1}(01\dots n))=\rho((-1)01\dots (n-1))=nf-\frac{n-1}{m}$.
Applying Corollary~\ref{KLaw1}, we get
$\rho((-1+k(b-1))01\dots n)=\rho((-1+k(b-1))01\dots (n-1))=\frac{1}{b^k}f$.
Using the Kirchhoff's law for frequencies (Observation~\ref{KLaw}), we have 
$\rho((-1)01\dots n)=nf-\frac{n-1}{m}-\sum_{k=1}^{q-1}\frac{f}{b^k}=(n+1)f-\frac{n}{m}$.
\end{proof}
\item For $b+1 \leq n \leq 2b-1$, the reduced Rauzy graph $\tilde \Gamma_n$ has $3m$ vertices: $m$ of them are BS factors of the form
$\Pi_k(01\dots (n-1))$, $k \in {\mathbb Z}_m$, $m$ of them are LS factors of the form $\Pi_k(01\dots(b-1)1\dots (n-b))$, $k \in {\mathbb Z}_m$, $m$ of them are RS factors obtained by applying $\Psi_0$ to LS factors. Since symmetries preserve frequencies, all BS factors have their frequency equal to $\rho(01\dots (n-1))$ and similarly, all LS and RS factors have their frequency equal to $\rho(01\dots(b-1)1\dots (n-b))$.
By analogous arguments as in part $(1)$, we deduce that it suffices to describe frequencies of edges ending in $01\dots (n-1)$ and in $01\dots(b-1)1\dots (n-b)$ and the frequency of the unique edge $01\dots (b-1)1\dots (n+1-b)$ starting in $01\dots(b-1)1\dots (n-b)$ in order to get all edge frequencies of $\tilde \Gamma_n$.
Again by~\cite{St},
$$\begin{array}{rcl}
{\rm Lext}(01\dots(n-1))&=&\{-1, b-2\},\\
{\rm Lext}(01\dots(b-1)1\dots(n-b))&=&\{-1+k(b-1)\bigm | k \in \{0,1,\dots, q-1\}\}.
\end{array}$$
\begin{lem}\label{freq_for_n_greaterthan_b}
Denote $f=\rho(01)$. Then for $b+1\leq n \leq 2b-1$, the frequencies
\begin{enumerate}
\item of the BS vertex $w=01\dots(n-1)$ and of the edges ending in $w$ satisfy
$$\begin{array}{rcl}
\rho(01\dots(n-1))&=&\frac{1}{b^{q-1}}f-\frac{(n-b-1)}{b^q}f,\\
\rho((-1)01\dots(n-1))&=&\frac{1}{b^{q-1}}f-\frac{(n-b)}{b^q}f,\\
\rho((b-2)01\dots (n-1))&=&\frac{1}{b^q}f,
\end{array}$$
\item of the edge $01\dots (b-1)1\dots (n+1-b)$ starting in the LS vertex $v=01\dots(b-1)1\dots (n-b)$ and of the edges ending in $v$ satisfy
$$\begin{array}{rcll}
\rho(01\dots(b-1)1\dots (n+1-b))&=&\frac{1}{b}f,&\\
\rho((-1)01\dots(b-1)1\dots (n-b))&=&\frac{1}{b^q}f,&\\
\rho((-1+(b-1))01\dots (b-1)1\dots (n-b))&=&\frac{1}{b}(2f-\frac{1}{m}),& \\
\rho((-1+k(b-1))01\dots (b-1)1\dots (n-b))&=&\frac{1}{b^k}f &\text{for} \ k \in \{2,\dots, q-1\}.
\end{array}$$
\end{enumerate}
\end{lem}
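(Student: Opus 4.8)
The plan is to determine, in each part, the frequency of the outgoing (resp. incoming) edges and of the distinguished vertex by reducing the relevant extension words to short ancestors via Proposition~\ref{unique_interpretation}, identifying those ancestors with entries of Lemma~\ref{freq_for_n_lessorequal_b} through the symmetries in $D_m$ (Observation~\ref{same_frequency}), and closing the remaining gaps with the Kirchhoff law (Observation~\ref{KLaw}). For part (b) this is direct, while for part (a) I would run an induction on $n$ (exactly as in Lemma~\ref{freq_for_n_lessorequal_b}), starting from the case $n=b$ already settled there and using the explicit value $f=\frac{b^{q-1}}{m}\frac{b-1}{b^q-1}$ to match the closed forms.

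For part (b), the word $v=01\dots(b-1)1\dots(n-b)$ carries a synchronization point at the defect $(b-1)\mid1$ (here $(b-1)+1=b\ne1\bmod m$ in the aperiodic case), so every extension below is circular with a unique interpretation. Its unique right extension gives the edge $01\dots(b-1)1\dots(n+1-b)$, a prefix of $\varphi(01)$ whose ancestor is $01$, whence $\rho=\frac1b\rho(01)=\frac fb$ by Proposition~\ref{unique_interpretation}. For an edge ending in $v$ I read the interpretation off the defect: the leading letter $-1+k(b-1)$ is the last letter of $\varphi\bigl(-1+(k-1)(b-1)\bigr)$, the central block $01\dots(b-1)$ is $\varphi(0)$, and $1\dots(n-b)$ is a prefix of $\varphi(1)$, so the unique ancestor is $\bigl(-1+(k-1)(b-1)\bigr)01$ (erase $b-1$ letters on the left and $2b-n$ on the right), giving
$$\rho\bigl((-1+k(b-1))\,v\bigr)=\tfrac1b\,\rho\bigl((-1+(k-1)(b-1))01\bigr).$$
Feeding in the $n=2$ instances of Lemma~\ref{freq_for_n_lessorequal_b}, namely $\rho((-1)01)=2f-\frac1m$ and $\rho((-1+k'(b-1))01)=\frac1{b^{k'}}f$ for $k'\in\{1,\dots,q-1\}$, and reading the index cyclically modulo $q$ (so that $k=0$ uses ancestor index $q-1$, since $-1\equiv -1+(q-1)(b-1)$ as $q(b-1)\equiv0$), yields exactly $\frac1{b^q}f$, $\frac1b(2f-\frac1m)$, and $\frac1{b^k}f$. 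A Kirchhoff check at $v$ (incoming sum $=$ outgoing $=\frac fb$) reproduces the value of $f$ and confirms consistency.

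For part (a), the BS vertex $w=01\dots(n-1)$ is the genuine consecutive run $0,1,\dots,n-1$, which occurs even though $n>b$ because $\varphi$ wraps modulo $m$ (an image ending in its last letter may be followed by the next image continuing the run); this $w$ is \emph{not} circular, so I treat $\rho(w)$ together with its two incoming edges $(-1)w$ and $(b-2)w$ (its left extensions being $\{-1,b-2\}$). The edge $(b-2)w$ \emph{is} circular: the defect $(b-2)\mid0$ forces the decomposition $\varphi(-1)\,\varphi(0)\,[\text{prefix of }\varphi(b)]$, hence the ancestor $(-1)0b$, and via the antimorphism $\Psi_0$ sending $(-1)0b$ to $(-b)01=\bigl(-1+(q-1)(b-1)\bigr)01$ Lemma~\ref{freq_for_n_lessorequal_b} gives
$$\rho\bigl((b-2)w\bigr)=\tfrac1b\rho\bigl((-1)0b\bigr)=\tfrac1b\rho\bigl((-b)01\bigr)=\tfrac1b\cdot\tfrac{1}{b^{q-1}}f=\tfrac{1}{b^q}f,$$
independently of $n$. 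The morphism $\Pi_{-1}$ gives $(-1)w=\Pi_{-1}(01\dots n)$, so $\rho((-1)w)=\rho(01\dots n)$ is the BS-vertex frequency one length higher; combined with the Kirchhoff identity $\rho(w)=\rho((-1)w)+\rho((b-2)w)$ this becomes the recursion $\rho(01\dots n)=\rho(01\dots(n-1))-\frac1{b^q}f$, whose base $\rho(01\dots b)=\rho((-1)01\dots(b-1))=bf-\frac{b-1}{m}=\frac1{b^{q-1}}f$ comes from the $n=b$ line of Lemma~\ref{freq_for_n_lessorequal_b} and the value of $f$. Solving the recursion produces the three displayed formulas.

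The delicate step — and the main obstacle — is pinning down the unique ancestor of each extension word. The boundary after $\varphi(0)$ inside the run is hidden (it sits at a pair $(b-1)b$ that looks like a legal increment, not a visible defect), so it is only the leading defect, $(b-2)\mid0$ in part (a) and $(b-1)\mid1$ in part (b), that fixes the whole image decomposition; one must verify these defects genuinely occur (which uses $b\ne1\bmod m$) and that the surviving ancestor is the asserted length-$2$ or length-$3$ word. The second point of care is matching those short ancestors to the correct entry of Lemma~\ref{freq_for_n_lessorequal_b} through the right element of $D_m$, in particular the cyclic wrap $k=0\leftrightarrow q-1$ coming from $q(b-1)\equiv0\bmod m$. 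Once these identifications are fixed, what remains are the routine arithmetic identities (such as $bf-\frac{b-1}{m}=\frac1{b^{q-1}}f$) forced by $f=\frac{b^{q-1}}{m}\frac{b-1}{b^q-1}$.
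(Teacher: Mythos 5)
Your proposal is correct and follows essentially the same route as the paper: pull each extension word back to a two- or three-letter ancestor via Proposition~\ref{unique_interpretation}, identify that ancestor with an entry of Lemma~\ref{freq_for_n_lessorequal_b} through a symmetry in $D_m$, and close the remaining edge frequency with Kirchhoff's law (Observation~\ref{KLaw}), running the same recursion in part (a). The only cosmetic deviations are that you read off the interpretation directly for every $n$ in the range where the paper instead does a base case $n=b+1$ plus induction via Corollary~\ref{KLaw1}, that you compute the $k=0$ edge of part (b) from its ancestor via the cyclic wrap $q(b-1)\equiv 0 \bmod m$ where the paper obtains it as the Kirchhoff residual, and that in part (a) you reduce $\rho((b-2)w)$ through the ancestor $(-1)0b$ and the antimorphism $\Psi_0$ where the paper goes through $\varphi^2((1-b)b)$ and the morphism $\Pi_{m-b}$ --- all yielding the same values.
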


\begin{proof} Let us proceed by induction on $n$.\\
\noindent $(a)$ Let $n=b+1$, then using part $(1)$, we obtain $\rho(01\dots b)=\rho(\Pi_{-1}(01\dots b))=\rho((-1)01\dots (b-1))=bf-\frac{b-1}{m}=\frac{1}{b^{q-1}}f$.
By Corollary~\ref{KLaw1}, Proposition~\ref{unique_interpretation}, and Observation~\ref{same_frequency},
we have $\rho((b-2)01\dots b)=\rho(\varphi((-1)0b))=\rho(\varphi^2((1-b)b))=\frac{1}{b^2}\rho((1-b)b)=\frac{1}{b^2}\rho(\Pi_{m-b}((1-b)b))=
\frac{1}{b^2}\rho((-1+(q-2)(b-1))0)=\frac{1}{b^q}f$.
Finally, applying Observation~\ref{KLaw}, we get
$\rho((-1)0\dots b)=\frac{1}{b^{q-1}}f-\frac{1}{b^q}f$.

Let $b+1<n+1\leq 2b-1$. Assume
$$\begin{array}{rcl}
\rho(01\dots(n-1))&=&\frac{1}{b^{q-1}}f-\frac{(n-b-1)}{b^q}f,\\
\rho((-1)01\dots(n-1))&=&\frac{1}{b^{q-1}}f-\frac{(n-b)}{b^q}f,\\
\rho((b-2)01\dots (n-1))&=&\frac{1}{b^q}f.
\end{array}$$
Then,
$\rho(01\dots n)=\rho(\Pi_{-1}(01\dots n))=\rho((-1)01\dots (n-1))=\frac{1}{b^{q-1}}f-\frac{(n-b)}{b^q}f$.
Applying Corollary~\ref{KLaw1}, we get
$\rho((b-2)01\dots n)=\rho((b-2)01\dots (n-1))=\frac{1}{b^q}f$.
Using the Kirchhoff's law for frequencies (Observation~\ref{KLaw}), we have
$\rho((-1)01\dots n)=\frac{1}{b^{q-1}}f-\frac{(n-b)}{b^q}f-\frac{1}{b^q}f=\frac{1}{b^{q-1}}f-\frac{(n+1-b)}{b^q}f$.

\vspace{0.3cm}

\noindent $(b)$
Let $n=b+1$, then by Corollary~\ref{KLaw1} and Proposition~\ref{unique_interpretation}, it follows
$\rho(01\dots(b-1)12)=\rho(\varphi(01))=\frac{1}{b}f$.
Again, by Corollary~\ref{KLaw1} and Proposition~\ref{unique_interpretation}, it holds for $k \in \{2,\dots, q-1\}$ that $\rho((-1+k(b-1))01\dots(b-1)1)=\rho(\varphi((-1+(k-1)(b-1))01))=\frac{1}{b}\rho((-1+(k-1)(b-1))01)=\frac{1}{b^{k}}f,$
and for $k=1$, we have by the same arguments
$\rho((-1+(b-1))01\dots(b-1)1)=\rho(\varphi((-1)01))=\frac{1}{b}\rho((-1)01)=\frac{1}{b}(2f-\frac{1}{m}).$
Finally, by the Kirchhoff's law for frequencies (Observation~\ref{KLaw}), we derive
$\rho((-1)01\dots(b-1)1)=\frac{1}{b}f-\frac{1}{b}(2f-\frac{1}{m})-\sum_{k=2}^{q-1}\frac{1}{b^{k}}f=\frac{1}{b^q}f.$

Let $b+1<n+1\leq 2b-1$. Assume
$$\begin{array}{rcll}
\rho(01\dots(b-1)1\dots (n+1-b))&=&\frac{1}{b}f,&\\
\rho((-1)01\dots(b-1)1\dots (n-b))&=&\frac{1}{b^q}f,&\\
\rho((-1+(b-1))01\dots (b-1)1\dots (n-b))&=&\frac{1}{b}(2f-\frac{1}{m}),& \\
\rho((-1+k(b-1))01\dots (b-1)1\dots (n-b))&=&\frac{1}{b^k}f &\text{for} \ k \in \{2,\dots, q-1\}.
\end{array}$$
By Corollary~\ref{KLaw1}, we have
$\rho(01\dots(b-1)1\dots (n+2-b))=\rho(01\dots(b-1)1\dots (n+1-b))=\frac{1}{b}f$.
Again by Corollary~\ref{KLaw1}, we get for all $k \in \{2,\dots, q-1\}$,
$\rho((-1+k(b-1)01\dots (b-1)1\dots (n+1-b))=\rho((-1+k(b-1)01\dots (b-1)1\dots (n-b))=\frac{1}{b^k}f$, and analogously,
$\rho((-1+(b-1)01\dots (b-1)1\dots (n+1-b))=\rho((-1+(b-1)01\dots (b-1)1\dots (n-b))=\frac{1}{b}(2f-\frac{1}{m})$.
Using the Kirchhoff's law for frequencies (Observation~\ref{KLaw}), we have
$\rho((-1)01\dots(b-1)1\dots (n+1-b))=\frac{1}{b}f-\frac{1}{b}(2f-\frac{1}{m})-\sum_{k=2}^{q-1}\frac{1}{b^k}f=\frac{1}{b^q}f$.
\end{proof}
\end{enumerate}
\begin{thm}\label{main} Let $b\geq 2, m\geq 1, b,m \in \mathbb N$, and $b \not =1 \mod m$. Let ${\mathbf t}_{b,m}$ be the fixed point starting in $0$ of the morphism $\varphi_{b,m}$ defined in~\eqref{TMmorphism}. Then the sets of factor frequencies take the following form for $N \in \mathbb N$.

\begin{tabular}{|l|l|}
\hline
& \\
$N$ & $\{\rho(e)\bigm | e \in {\mathcal L}_{{\mathbf t}_{b,m}}(N+1)\}$\\
\hline
\hline
& \\
$0$ & $\frac{1}{m}$\\
\hline
&\\
$1$ & $\frac{f}{b^k}, \ \text{where} \ k\in\{0,\dots, q-1\}$\\
\hline
&\\
$(n-1)b^{\ell}<N<nb^{\ell}, \ \ell \in \mathbb N$, & $\frac{1}{b^{\ell}}\left((n-1)f-\frac{n-2}{m})\right), \frac{1}{b^{\ell}}\left(nf-\frac{n-1}{m}\right), \frac{1}{b^{\ell}}\left(\frac{1}{b^k}f\right)$,\\
$\text{where} \ n \in\{3,\dots, b\}$&$ \text{where} \ k\in\{1,\dots, q-1\}$\\
\hline
& \\
$(n-1)b^{\ell}<N<nb^{\ell}, \ \ell \in \mathbb N$, &$\frac{1}{b^{\ell}}\left(\frac{1}{b^{q-1}}f-\frac{(n-b-1)}{b^q}f\right), \frac{1}{b^{\ell}}\left(\frac{1}{b^{q-1}}f-\frac{(n-b)}{b^q}f\right), \frac{1}{b^{\ell+1}}(2f-\frac{1}{m}),\frac{1}{b^{\ell}}\left(\frac{1}{b^k}f\right)$,\\
$\text{where} \ n \in\{b+1,\dots, 2b-1\}$& $\text{where}\  k \in \{1,\dots, q\}$\\
\hline
& \\
$(2b-1)b^{\ell}<N<2b^{\ell+1}, \ \ell \in \mathbb N$ &$\frac{1}{b^{\ell+1}}\left(2f-\frac{1}{m}\right), \frac{1}{b^{\ell+1}}\left(\frac{1}{b^k}f\right)$,\\
& $\text{where}\  k \in \{0,\dots, q-1\}$\\
\hline
& \\
$nb^{\ell}, \ \ell \in \mathbb N$,& $\frac{1}{b^{\ell}}\left(nf-\frac{n-1}{m}\right), \frac{1}{b^{\ell}}\left(\frac{f}{b^k}\right)$,\\
$\text{where}\ n\in \{2,\dots,b\}$& $\text{where} \ k\in\{1,\dots, q-1\}$\\
\hline
& \\
$nb^{\ell}, \ell\in \mathbb N$,& $\frac{1}{b^{\ell}}\left(\frac{1}{b^{q-1}}f-\frac{(n-b)}{b^q}f\right),\frac{1}{b^{\ell+1}}\left(2f-\frac{1}{m}\right), \frac{1}{b^{\ell}}\left(\frac{1}{b^k}f\right)$,\\
$\text{where} \ n \in \{b+1,\dots, 2b-1\}$& $\text{where}\  k \in \{1,\dots, q\}$\\
\hline
\end{tabular}
\end{thm}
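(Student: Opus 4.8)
The statement merely consolidates the reduced Rauzy graph method with Lemmas~\ref{freq_for_n_lessorequal_b} and~\ref{freq_for_n_greaterthan_b}, so the plan is to feed the explicit frequencies computed there into the two RRG formulae and then sort the outcome according to the position of $N$. Recall that the RRG method expresses $\{\rho(e)\mid e\in\mathcal L_{N+1}(\mathbf t_{b,m})\}$ through a single base graph $\tilde\Gamma_n$ with $n\in\{2,\dots,2b\}$: for an open interval $(n-1)b^{\ell}<N<nb^{\ell}$ one takes $\frac{1}{b^{\ell}}$ times the union of the edge-frequency set and the BS-vertex-frequency set of $\tilde\Gamma_n$, whereas for $N=nb^{\ell}$ one takes $\frac{1}{b^{\ell}}$ times the edge-frequency set alone. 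First I would dispose of the two exceptional rows: $N=0$ gives the letter frequency $\frac{1}{m}$ by Property~$(1)$, and $N=1$ gives the edges of $\tilde\Gamma_1$, namely $\{f/b^{k}\mid k\in\{0,\dots,q-1\}\}$, where the value $\rho((-1)0)=f$ supplies the $k=0$ term.

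For the bulk of the table I would read the two ingredients of each base graph directly off the lemmas. For $2\le n\le b$, Lemma~\ref{freq_for_n_lessorequal_b} gives the BS-vertex frequency $(n-1)f-\frac{n-2}{m}$ and the edge-frequency set $\{nf-\frac{n-1}{m}\}\cup\{f/b^{k}\mid k\in\{1,\dots,q-1\}\}$, using the symmetry argument that the edges ending in $01\dots(n-1)$ already realize all edge frequencies of $\tilde\Gamma_n$. For $b+1\le n\le 2b-1$, Lemma~\ref{freq_for_n_greaterthan_b} gives the BS-vertex frequency $\frac{1}{b^{q-1}}f-\frac{n-b-1}{b^{q}}f$, and collecting parts~(a) and~(b) gives the edge-frequency set $\{\frac{1}{b^{q-1}}f-\frac{n-b}{b^{q}}f,\ \frac{1}{b}(2f-\frac{1}{m})\}\cup\{f/b^{k}\mid k\in\{1,\dots,q\}\}$. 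Substituting these data into the two RRG formulae and multiplying through by $\frac{1}{b^{\ell}}$ reproduces the four rows indexed by $n\in\{3,\dots,b\}$ and $n\in\{b+1,\dots,2b-1\}$ (open intervals) and by $n\in\{2,\dots,b\}$ and $n\in\{b+1,\dots,2b-1\}$ (the values $N=nb^{\ell}$).

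The only row that needs extra care is $(2b-1)b^{\ell}<N<2b^{\ell+1}$, which corresponds to $n=2b$, since $\tilde\Gamma_{2b}$ is not a base graph. Here I would invoke Proposition~\ref{BS_TM}: because $2b=2\cdot b$, the graph $\tilde\Gamma_{2b}$ is the image of $\tilde\Gamma_2$ under one application of $\varphi_{b,m}$, so its edges and its BS vertex carry exactly $\frac{1}{b}$ times the frequencies of $\tilde\Gamma_2$; the case then reduces to $\frac{1}{b^{\ell+1}}$ times the union of the edge-frequency and BS-vertex-frequency sets of $\tilde\Gamma_2$, which is the stated row. It then remains to confirm that the six rows tile $\mathbb N_{\ge 2}$ without gaps or overlaps. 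The BS lengths are exactly the integers $nb^{\ell}$ with $n\in\{1,\dots,2b-1\}$ by Proposition~\ref{BS_TM}, and by Observation~\ref{StaciGrafyBS} the frequency set is constant strictly between consecutive BS lengths; the point to notice is that the naive interval $(b^{\ell},2b^{\ell})$ is not elementary but is subdivided by the finer BS lengths $(b+1)b^{\ell-1},\dots,(2b-1)b^{\ell-1}$, so it is already covered by the $n\in\{b+1,\dots,2b\}$ rows at level $\ell-1$, which is why no separate $n=2$ interval is listed.

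Since the computational weight sits entirely in Lemmas~\ref{freq_for_n_lessorequal_b} and~\ref{freq_for_n_greaterthan_b}, the theorem itself is bookkeeping, and I expect the main obstacle to be getting that bookkeeping exactly right in two places. The first is the index merging: a singleton $f=f/b^{0}$, or $\frac{1}{b}f=f/b^{1}$, coming from a single edge must be absorbed into the geometric family $\{f/b^{k}\}$, and in the range $b+1\le n\le 2b-1$ the three contributions indexed by $\{1\}$, $\{2,\dots,q-1\}$ and $\{q\}$ (arising respectively from $\frac{1}{b}f$, the bulk of part~(b), and the two coincident copies of $\frac{1}{b^{q}}f$) must be checked to close up to $\{1,\dots,q\}$. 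The second is the tiling verification of the previous paragraph, where the $(b^{\ell},2b^{\ell})$ subdivision is the only genuinely non-obvious point; everything else is a routine comparison of the assembled sets against the six table entries.
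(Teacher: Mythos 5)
Your proposal is correct and follows exactly the route of the paper's own (one-sentence) proof: substituting the frequencies from Lemmas~\ref{freq_for_n_lessorequal_b} and~\ref{freq_for_n_greaterthan_b} into step~(3) of the RRG method. The bookkeeping details you supply --- merging the geometric families into $\{f/b^k\}$ with the correct index ranges, reducing the $n=2b$ case to $\tilde\Gamma_2$ via one application of $\varphi_{b,m}$, and noting that the interval $(b^{\ell},2b^{\ell})$ is subdivided by finer BS lengths --- are all accurate and are precisely what the paper leaves implicit.
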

\begin{proof}
The statement is obtained when putting together Lemmas~\ref{freq_for_n_lessorequal_b} and~\ref{freq_for_n_greaterthan_b}
and step $(3)$ of the RRG method.
\end{proof}

\section{Upper bound on frequencies}
In the last section, let us show and explain that the optimal upper bound on the number of factor frequencies in infinite words whose language is invariant under more symmetries, here recalled as Theorem~\ref{UpperBoundMoreSymmetries}, is not reached for large enough $n$ for any generalized Thue-Morse word ${\mathbf t}_{b,m}$ with $b\geq 2, m\geq 1, b,m \in \mathbb N$, and $b \not =1 \mod m$.
First of all, the upper bound cannot be attained for $q>2:$ since $q$ corresponds to the number of extensions of special factors, 
the estimate $\#\{w\in {\mathcal L}_n(\mathbf u) \mid   w\ RS\}  \leq   \C(n+1)-\C(n)  = \sum_{w \in  {\mathcal L}_n(\mathbf u)}\bigl (\# \rm{Rext}(w)-1 \bigr),$ used in the proof of Theorem~\ref{UpperBoundMoreSymmetries}, is too rough for $q>2$. 
Indeed, the upper bound is greater than or equal to $2q-2$ 
(we have used the result $\C(n+1)-\C(n)\geq qm-m$ from~\cite{St}), while applying Theorem~\ref{main}, we can see that the number of frequencies of factors of the same length is at most $q+3$.

Nevertheless, even in the case of $q=2$, if we take $n=(2b-1)b^{\ell}$ for any $l \in \mathbb N$,
then $\#\{\rho(e)\bigm | e \in {\mathcal L}_{{\mathbf t}_{b,m}}(n+1)\}=3=q+1$ by Theorem~\ref{main}.
By the description of factor complexity from~\cite{St}, we have $\C(n+1)-\C(n)=qm=2m$.
It follows from Properties of $\varphi_{b,m}$ and ${\mathbf t}_{b,m}$ summarized in Section~\ref{ChFrequencyGeneralizedTM} that
$\#G=2m$ and the number of BS factors of length $n$ is equal to $m$ and is the same as the number of BS factors being $\theta$-palindromes for some antimorphism $\Psi_x, \ x \in {\mathbb Z}_m$. Therefore, the upper bound from Theorem~\ref{UpperBoundMoreSymmetries} is equal to $
\frac{1}{2m}\Bigl(8m+2m- m-m \Bigr)=4.$ Hence, for any $M\in \mathbb N$, the equality in the upper bound from Theorem~\ref{UpperBoundMoreSymmetries} is not reached for all $n \geq M$.

Let us explain the reason. In the proof of Theorem~\ref{UpperBoundMoreSymmetries}, we have used the invariance of ${\mathcal L}({\mathbf t}_{b,m})$ under symmetries in order to obtain the upper bound on the number of factor frequencies. However, some factors may have the same frequency for another reason. We observe as a~direct consequence of Corollary~\ref{KLaw1} the following.
\begin{obs}\label{SlabeBS}
If $w$ is a~BS factor of an infinite word $\mathbf u$ such that for every $a \in {\rm Lext}(w)$, there exists a~unique $b \in {\rm Rext}(w)$ satisfying $awb \in {\mathcal L}(\mathbf u)$ (let us call such BS factors {\em weak}), then $\rho(aw)=\rho(awb)=\rho(wb)$.
\end{obs}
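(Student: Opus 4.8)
The plan is to read the single Rauzy-graph edge $awb$ from both of its endpoints and to apply Corollary~\ref{KLaw1} twice. Fix $a \in {\rm Lext}(w)$ and let $b \in {\rm Rext}(w)$ be the unique letter with $awb \in \mathcal{L}(\mathbf u)$ furnished by the weakness hypothesis; all assertions below are made for this paired $(a,b)$.

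First I would determine the right extensions of the factor $aw$. If $c$ is any letter with $awc \in \mathcal{L}(\mathbf u)$, then in particular $wc \in \mathcal{L}(\mathbf u)$, so $c \in {\rm Rext}(w)$, and the weakness hypothesis forces $c = b$. Hence $b$ is the only right extension of $aw$, and the first item of Corollary~\ref{KLaw1} (unique right extension) gives $\rho(aw) = \rho(awb)$ at once.

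The second equality $\rho(awb) = \rho(wb)$ is the mirror statement, obtained from the second item of Corollary~\ref{KLaw1}, and to invoke it I must show that $a$ is the unique left extension of $wb$. This is the main obstacle, since the hypothesis is phrased only from the left: it says exactly that the admissibility relation $R = \{(a',b') \in {\rm Lext}(w)\times{\rm Rext}(w) : a'wb' \in \mathcal{L}(\mathbf u)\}$ is the graph of a \emph{function} $\beta\colon {\rm Lext}(w)\to {\rm Rext}(w)$, but not a priori that $\beta$ is injective. I would first note $\beta$ is onto: each $b' \in {\rm Rext}(w)$ has $wb' \in \mathcal{L}(\mathbf u)$, and since $wb'$ occurs in $\mathbf u$ it extends to the left to some $a'wb'$, i.e.\ $\beta(a') = b'$. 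The crux is to upgrade this to a bijection, so that the chosen $b$ has a single $\beta$-preimage; once $a$ is the unique left extension of $wb$, Corollary~\ref{KLaw1} yields $\rho(wb) = \rho(awb)$ and the chain $\rho(aw) = \rho(awb) = \rho(wb)$ closes.

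I expect the injectivity of $\beta$ to be precisely the point that the notion of a weak factor must pin down. Indeed, by Kirchhoff's law (Observation~\ref{KLaw}) one has $\rho(wb) = \sum_{a' :\, \beta(a') = b}\rho(a'wb)$, so $\rho(wb) = \rho(awb)$ holds exactly when this sum is a single term, i.e.\ when $\beta$ is injective at $b$. I would therefore either read injectivity into the definition of weakness (so that $\beta$ is a perfect matching between ${\rm Lext}(w)$ and ${\rm Rext}(w)$), or deduce it from $|{\rm Lext}(w)| = |{\rm Rext}(w)|$ together with the surjectivity already established. With that matching available, both equalities are immediate consequences of Corollary~\ref{KLaw1}, which is why the observation is naturally stated as a direct corollary of it.
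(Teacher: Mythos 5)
Your approach is exactly the paper's: the paper gives no written proof at all, introducing the observation as ``a direct consequence of Corollary~\ref{KLaw1}'', i.e.\ precisely your two applications of that corollary, one for $\rho(aw)=\rho(awb)$ and one for $\rho(wb)=\rho(awb)$. Your first step is complete and correct: $awc\in\mathcal{L}(\mathbf u)$ forces $c\in{\rm Rext}(w)$, hence $c=b$ by weakness, so $aw$ has the unique right extension $b$.

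The injectivity issue you isolate is a genuine one, but it is a defect of the statement as written, not of your argument. As you observe via Observation~\ref{KLaw}, $\rho(wb)=\sum_{a':\,a'wb\in\mathcal{L}(\mathbf u)}\rho(a'wb)$, so the second equality holds precisely when $a$ is the \emph{only} left extension of $wb$; the hypothesis, read literally, only makes the extension relation a function ${\rm Lext}(w)\to{\rm Rext}(w)$ and does not exclude, say, three left extensions mapping onto two right extensions, in which case the conclusion fails whenever the extra term has positive frequency. The intended reading of ``weak'' is clearly the symmetric (perfect matching) one, and in the paper's only use of the observation this holds: for $w=01\dots(2b-2)$ the complete list of two-sided extensions is $(b-2)w(2b-1)$ and $(-1)wb$, so ${\rm Lext}(w)$ and ${\rm Rext}(w)$ both have two elements matched bijectively, and likewise for the images $\varphi_{b,m}^{\ell}(w)$; there both applications of Corollary~\ref{KLaw1} are licensed and the one-line proof goes through. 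Two small caveats on your proposed repairs: the surjectivity of $\beta$ tacitly uses that every factor of $\mathbf u$ has a left extension (true for recurrent words such as ${\mathbf t}_{b,m}$, but worth saying), and $\#{\rm Lext}(w)=\#{\rm Rext}(w)$ is likewise not part of the stated hypothesis, so the cleanest fix is the one you mention first: build the bijectivity of the matching into the definition of weakness.
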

For $n=2b-1$, the BS factor of the form $w=01\dots (2b-2)$ is weak: $w$ can be extended in only two ways, as $(b-2)w(2b-1)$ and as $(-1)wb$.  Hence, $\rho((-1)w)=\rho(wb)$ even if these factors are not symmetric images of each other.
Similarly, the BS factor $v=\varphi_{b,m}^{\ell}(w)$ of length $n=(2b-1)b^{\ell}$ is weak and $\rho(av)=\rho(vb)$, where $a$ is the last letter of $\varphi_{b,m}^{\ell}(-1)$, i.e., $a=-1+\ell(b-1)$.
It holds again that $av$ and $vb$ are not symmetric images of one another.
\section{Acknowledgements}
I acknowledge financial support by the Czech Science Foundation
grant 201/09/0584, by the grants MSM6840770039 and LC06002 of the
Ministry of Education, Youth, and Sports of the Czech Republic.

\end{document}